\newtheorem{thm}{Theorem}[section]
\newtheorem{cor}[thm]{Corollary}
\newtheorem{prop}[thm]{Proposition}
\newtheorem{lem}[thm]{Lemma}
\newtheorem*{lov}{Asymmetric Lov{\'a}sz Local Lemma, Infinite Version}
\theoremstyle{definition}
\newtheorem{defn}[thm]{Definition}
\newtheorem{q}[thm]{Question}
\renewcommand{\leq}{\leqslant}
\renewcommand{\geq}{\geqslant}
\newcommand{\converges}{\mathord{\downarrow}}
\newcommand{\diverges}{\mathord{\uparrow}}
\newcommand{\sub}[1]{_{\textup{\tiny{\fontfamily{cmr}\selectfont #1}}}}
\newcommand{\uhr}{\upharpoonright}
\DeclareMathOperator{\pr}{Pr}
\DeclareMathOperator{\vbl}{vbl}
\DeclareMathOperator{\dom}{dom}
\DeclareMathOperator{\subt}{Subt}
\title[The reverse mathematics of HT for sums of exactly two
elements]{The reverse mathematics of Hindman's Theorem for sums of
exactly two elements} 
\author[Csima]{Barbara F. Csima}
\address{Department of Pure Mathematics, University of Waterloo}
\email{csima@uwaterloo.ca}
\author[Dzhafarov]{Damir D. Dzhafarov}
\address{Department of Mathematics, University of Connecticut}
\email{damir@math.uconn.edu}
\author[Hirschfeldt]{Denis R. Hirschfeldt}
\address{Department of Mathematics, University of Chicago}
\email{drh@math.uchicago.edu}
\author[Jockusch]{Carl G. Jockusch, Jr.}
\address{Department of Mathematics, University of Illinois}
\email{jockusch@math.uiuc.edu}
\author[Solomon]{Reed Solomon}
\address{Department of Mathematics, University of Connecticut}
\email{david.solomon@uconn.edu}
\author[Westrick]{Linda Brown Westrick}
\address{Department of Mathematics, University of Connecticut}
\email{linda.westrick@uconn.edu}
\thanks{Csima was partially supported by Canadian NSERC Discovery Grant
312501. Dzhafarov was partially supported by grant DMS-1400267 from
the National Science Foundation of the United States and a
Collaboration Grant for Mathematicians from the Simons
Foundation. Hirschfeldt was partially supported by grant DMS-1101458
from the National Science Foundation of the United States and a
Collaboration Grant for Mathematicians from the Simons
Foundation. We thank Jason Bell and Jeff Shallit for very helpful
suggestions that led us to the idea of using the Lov\'asz Local
Lemma. We also thank Ludovic Patey for information on RRT$^2_2$.}
\date{\today}
\begin{document}

\begin{abstract}
Hindman's Theorem (HT) states that for every coloring of $\mathbb N$
with finitely many colors, there is an infinite set $H \subseteq
\mathbb N$ such that all nonempty sums of distinct elements of $H$
have the same color. The investigation of restricted versions of HT
from the computability-theoretic and reverse-mathematical perspectives
has been a productive line of research recently. In particular,
HT$^{\leqslant n}_k$ is the restriction of HT to sums of at most $n$
many elements, with at most $k$ colors allowed, and HT$^{=n}_k$ is the
restriction of HT to sums of \emph{exactly} $n$ many elements and $k$
colors. Even HT$^{\leqslant 2}_2$ appears to be a strong principle,
and may even imply HT itself over RCA$_0$. In
contrast, HT$^{=2}_2$ is known to be strictly weaker than HT over
RCA$_0$, since HT$^{=2}_2$ follows immediately from Ramsey's Theorem
for $2$-colorings of pairs. In fact, it was open for several years
whether HT$^{=2}_2$ is computably true.

We show that HT$^{=2}_2$ and similar results with addition replaced by
subtraction and other operations are not provable in RCA$_0$, or even
WKL$_0$. In fact, we show that there is a computable instance of
HT$^{=2}_2$ such that all solutions can compute a function that is
diagonally noncomputable relative to $\emptyset'$. It follows that
there is a computable instance of HT$^{=2}_2$ with no $\Sigma^0_2$
solution, which is the best possible result with respect to the
arithmetical hierarchy. Furthermore, a careful analysis of the proof
of the result above about solutions DNC relative to $\emptyset'$ shows
that HT$^{=2}_2$ implies RRT$^2_2$, the Rainbow Ramsey Theorem for
colorings of pairs for which there are are most two pairs with each
color, over RCA$_0$. The most interesting aspect of our construction
of computable colorings as above is the use of an effective version of
the Lov\'asz Local Lemma due to Rumyantsev and Shen.
\end{abstract}

\maketitle

\section{Introduction}

This paper is concerned with the computability-theoretic and
reverse-math\-e\-mat\-i\-cal analysis of combinatorial principles, in
particular that of versions of Hindman's Theorem, a line of research
that began with the work of Blass, Hirst, and Simpson~\cite{BHS} and
has more recently seen substantial further development. Our main
contribution is to bring to the area the use of probabilistic methods,
in particular the Lov{\'a}sz Local Lemma, in an effective version due
to Rumyantsev and Shen \cite{R,RS}.

We assume familiarity with the basic concepts of computability theory
and reverse mathematics. For a principle $P$ of second-order
arithmetic of the form $\forall X\,[\Theta(X) \rightarrow \exists Y
\Psi(X,Y)]$, an \emph{instance} of $P$ is an $X$ such that
$\Theta(X)$ holds, and a \emph{solution} to this instance is a $Y$
such that $\Psi(X,Y)$ holds.

Hindman's Theorem (HT)~\cite{Hindman} states that for every coloring
$c$ of $\mathbb N$ with finitely many colors, there is an infinite set
$H \subseteq \mathbb N$ such that all nonempty sums of distinct
elements of $H$ have the same color. Blass, Hirst, and
Simpson~\cite{BHS} showed that such an $H$ can always be computed in
the $(\omega+1)$st jump of $c$, and that there is a computable
instance of HT such that every solution computes $\emptyset'$. By
analyzing these proofs they showed that HT is provable in ACA$_0^+$
(the system consisting of RCA$_0$ together with the statement that
$\omega$th jumps exist) and implies ACA$_0$ over RCA$_0$. The exact
reverse-mathematical strength of HT remains open, however.

Recently, there has been interest in investigating restricted versions
of Hindman's Theorem. For instance, HT$^{\leq n}$ is HT restricted to
sums of at most $n$ many elements, and HT$^{=n}$ is HT restricted to sums
of exactly $n$ many elements. We can also consider HT$^{\leq n}_k$ and
HT$^{=n}_k$, the corresponding restrictions to $k$-colorings. (Notice
that HT$^{\leq n}_{k+1}$ clearly implies HT$^{\leq n}_k$, and
similarly for HT$^{=n}$.) An interesting phenomenon is that quite weak
versions of HT are still rather difficult to prove. Indeed, there is
no known way to prove even HT$^{\leq 2}$ other than to give a proof of
the full HT, which has led Hindman, Leader, and Strauss~\cite{HLS} to
ask whether every proof of HT$^{\leq 2}$ is also a proof of HT.

Dzhafarov, Jockusch, Solomon, and Westrick~\cite{DJSW} showed that
HT$^{\leq 3}_3$ implies ACA$_0$ over RCA$_0$ and that HT$^{\leq 2}_2$
is not provable in RCA$_0$. Carlucci, Ko\l{}odzieczyk, Lepore, and
Zdanowski~\cite{CKLZ} investigated versions of Hindman's Theorem for
sums of bounded length in which the solutions are required to meet a
certain natural sparseness condition known as apartness, and in particular
showed that HT$^{\leq 2}_2$ with apartness implies ACA$_0$ over
RCA$_0$. (They also have results for HT$^{=n}_k$ with apartness.) They
then deduced that HT$^{\leq 2}_4$ (with no extra conditions) implies
ACA$_0$ over RCA$_0$. It remains open whether either of HT$^{\leq
  2}_2$ and ACA$_0$ implies the other over RCA$_0$.

The principle HT$^{=2}$ is quite different, as it follows immediately
from Ramsey's Theorem for pairs. For a set $S$, let $[S]^n$ be the set
of $n$-element subsets of $S$. Recall that RT$^n_k$ is the statement
that every $k$-coloring of $[\mathbb N]^n$ has an infinite homogeneous
set, that is, an infinite set $H$ such that all elements of $[H]^n$
have the same color. Then HT$^{=n}_k$ follows at once from RT$^n_k$,
as HT$^{=n}_k$ is essentially the restriction of RT$^{=n}_k$ to
colorings of $[\mathbb N]^n$ where the color of a set depends only on
the sum of its elements. For $n \geq 3$ (and $k \geq 2$), RT$^n_k$ is
equivalent to ACA$_0$ over RCA$_0$, but RT$^2_k$ is a weaker
principle, incomparable with WKL$_0$ (see
e.g.\ Hirschfeldt~\cite{Hbook} for further details).

The possibility was left open in late drafts of~\cite{DJSW} that
HT$^{=2}_2$ is so weak as to be computably true, although a brief note
at the end of the published version mentions the solution of this
problem and more in the current paper. We will show that HT$^{=2}_2$
is not computably true, and indeed, there is a computable instance of
HT$^{=2}_2$ such that the degree of any solution is DNC relative to
$\emptyset'$ (see Section~\ref{results} for a definition). It follows
that this instance does not have any computable or even $\Sigma^0_2$
solutions. Thus HT$^{=2}_2$ is not provable in RCA$_0$, or even in
WKL$_0$. (That HT$^{=2}_2$ does not imply WKL$_0$ follows from the
analogous fact for RT$^2_2$, proved by Liu~\cite{Liu}.) Our method
will also apply to a wider class of principles generalizing
HT$^{=2}_2$, including one studied by Murakami, Yamazaki, and
Yokoyama~\cite{MYY}.

The basic idea for showing that HT$^{=2}_2$ is not computably true is
straightforward. We build a computable instance $c : \mathbb N
\rightarrow 2$ of HT$^{=2}_2$ with no computable solution. Let
$W_0,W_1,\ldots$ be an effective listing of the c.e.\ sets. For $S
\subseteq \mathbb N$ and $s \in \mathbb N$, let $S + s = \{k + s : k
\in S\}$. For each $i$ we choose an appropriately large number $k_i$,
wait until at least $k_i$ many numbers enter $W_i$, and let $E_i$
consist of the first $k_i$ many numbers to enter $W_i$, if $|W_i| \geq
k_i$. We would then like to ensure, for all sufficiently large $s$,
that $E_i + s$ is not homogeneous for $c$, meaning that there are $x,y
\in E_i + s$ such that $c(x) \neq c(y)$. Then $E_i$ cannot be
contained in a solution to $c$, hence in particular $W_i$ cannot be
such a solution.

If we consider only a single fixed $i$, it is easy to define
(uniformly in $i$) a computable coloring $c_i$ that satisfies the
above, i.e., such that for all sufficiently large $s$, we have that
$E_i + s$ is not homogeneous for $c_i$. To do so, let $k_i = 2$, and
let $d = b - a$, where $E_i = \{a, b\}$ and $a < b$. Then define
$c_i$ recursively as follows. If $E_i$ has not been defined by stage
$s$ or $s < d$, let $c(s) = 0$. Otherwise (so $d$ is known at stage
$s$), let $c(s) = 1 - c(s - d)$. Then for all sufficiently large $s$,
we have that $c(b + s) \neq c(b + s - d) = c(a + s)$, so $E_i + s$ is
not homogeneous for $c$ because it contains $b + s$ and $a + s$.

However, the simple method above can break down even for two values of
$i$, say $i_0$ and $i_1$, at least if we take $k_i = 2$ for $i = i_0,
i_1$. In such a case, it could happen that $E_{i_0} = \{0,1\}$ and
$E_{i_1} = \{0,2\}$. Then for every $2$-coloring $c$ of $\mathbb{N}$
and every sufficiently large $s$, at least one of the three sets
$E_{i_0} + s$, $E_{i_1} + s$, and $E_{i_1} + (s+1)$ is homogeneous for
$c$, since otherwise the colors $c(s), c(s+1)$, and $c(s+2)$ are
pairwise distinct, contradicting the assumption that $c$ is a
$2$-coloring. Hence, for some $j \leq 1$, there are infinitely many
$s$ such that $E_{i_j} + s$ is homogeneous. Even if we increase the
$k_i$'s, overlaps between sets $E_i+s$ for different values of $i$ and
$s$ can cause problems in defining $c$. The only way we know to deal
with more than one value of $i$ is to use some version of the
Lov{\'a}sz Local Lemma as described below.

To implement this idea, we think of the bits $c(k)$ as mutually
independent random variables with the values $0$ and $1$ each having
probability $\frac{1}{2}$. If $E_i$ is large, then the event that $E_i
+ s$ is homogeneous for $c$ has low probability, namely $2^{-|E_i| +
 1}$. Furthermore, the events that $E_i + s$ is homogeneous and that
$E_j + t$ is homogeneous are independent whenever $s$ and $t$ are far
apart enough that $E_i + s$ and $E_j + t$ are disjoint. So what we
need is a theorem saying that when we have events with sufficiently
small probability that are somehow sufficiently independent, then it
is possible to avoid all of them at once. That is exactly what the
Lov{\'a}sz Local Lemma does.

However, this is not enough, because we need $c$ to be
computable. Thus we need an effective version of the Lov{\'a}sz Local
Lemma. Fortunately, such a result has been obtained by Rumyantsev and
Shen~\cite{R,RS}, as we describe in the next section. As we will see in
Section~\ref{results}, this result allows us to show easily that our
desired computable $c$ exists. Indeed, by computably approximating
finite subsets $E_i$ of $\Sigma^0_2$ sets, we will be able not only to
avoid computable solutions to $c$, but also to ensure that all
solutions to $c$ have DNC degree relative to $\emptyset'$.

Murakami, Yamazaki, and Yokoyama~\cite{MYY} defined a class of
principles that includes the principles HT$^{=n}_k$ as special cases.
For a function $f: [\mathbb N]^n \rightarrow \mathbb N$, let RT$^f_k$
be the following statement: For any $c : \mathbb N \rightarrow k$,
there is an infinite set $H \subseteq \mathbb N$ such that if $s,t \in
[H]^n$ then $c(f(s))=c(f(t))$. Let RT$^f$ be the principle $\forall
k\, \textup{RT}^f_k$. Notice that if $f(\{x_0,\ldots,x_{n-1}\})=x_0 +
\cdots + x_{n-1}$ then RT$^f_k$ is just HT$^{=n}_k$. As shown
in~\cite{MYY}, if $f: [\mathbb N]^n \rightarrow \mathbb N$ is a
bijection then RT$^f_k$ is equivalent over RCA$_0$ to RT$^n_k$, and
RT$^n_k$ is also equivalent to the statement that RT$^f_k$ holds for
all $f: [\mathbb N]^n \rightarrow \mathbb N$ (and hence implies
RT$^f_k$ for any particular such $f$).

The reason this definition appears in~\cite{MYY} is that the authors
were considering versions of a principle known as the Ramseyan
Factorization Theorem, and they showed that one of these versions is
equivalent to RT$^{\subt}$ for the function
$\subt(\{x_0,x_1\})=|x_0-x_1|$. They proved that RT$^{\subt}$ implies
B$\Sigma^0_2$ over RCA$_0$, with a proof that also applies to HT$^{=2}$,
and indeed to any HT$^f$ such that the image of an infinite set under
$f$ remains infinite. They left open whether RT$^{\subt}_k$ is
provable in RCA$_0$, implies RT$^2_k$, or is somewhere in between
these extremes.

As we will see, our results hold for RT$^{\subt}_2$ as well, and
indeed for RT$^f_2$ for any function satisfying the following
definition.

\begin{defn}
\label{aldef}
A function $f : [\mathbb N]^2 \rightarrow \mathbb N$ is
\emph{addition-like} if
\begin{enumerate}

\item $f$ is computable,

\item there is a computable function $g$ such that if $y>g(x,n)$ then
$f(\{x,y\})>n$, and

\item there is a $b$ such that for all $x \neq y$, there are at most
$b$ many $z$'s for which $f(\{x,z\})=f(\{x,y\})$.

\end{enumerate}
\end{defn}

We will finish the paper with some open questions, but would like to
highlight the following open-ended one here.

\begin{q}
What further uses do the Lov\'asz Local Lemma and other probabilistic
results have in the reverse-mathematical and computability-theoretic
analysis of combinatorial principles?
\end{q}

One example has already been given by Liu, Monin, and
Patey~\cite{LMP}. Another appears in Cholak, Dzhafarov, Hirschfeldt,
and Patey~\cite{CDHP}.

\section{The Lov{\'a}sz Local Lemma and its computable version}

The Lov{\'a}sz Local Lemma was introduced in Erd{\H o}s and
Lov{\'asz}~\cite{EL}. It is a major tool in obtaining lower bounds for
finite Ramsey numbers. See \cite[Section 4.2]{GRS} for a proof and
some applications of the Lov\'asz Local Lemma. The version that we
need, known as the Asymmetric Lov{\'a}sz Local Lemma, first appeared
in Spencer~\cite{S}. It is usually stated in a finite version, but the
infinite version below follows easily from the finite one by a
compactness argument, as pointed out in Proposition 3 of \cite{RS}.

Let $x_0,x_1,\ldots$ be a sequence of mutually independent random
variables, such that each $x_j$ has a finite range, say
$\{0,\ldots,f(j)\}$. Let $A_0,A_1,\ldots$ be events such that each
$A_j$ depends only on the variables $x_n$ for $n$ in some finite set
$\vbl(A_j)$. Thus each event $A_j$ is a Boolean combination of
statements of the form $x_n = k$ for $n \in \vbl(A_j)$ and $k \leq
f(n)$. We can think of $A_j$ as a finite set $S_j$ of functions with
domain $\vbl(A_j)$ such that if $g \in S_j$ then $g(n) \leq f(n)$ for
all $n \in \vbl(A_j)$. An \emph{assignment} of the $x_n$ is just a
function $h : \mathbb N \rightarrow \mathbb N$ such that $h(n) \leq
f(n)$ for all $n$. This assignment \emph{avoids} $A_j$ if the
restriction of $h$ to $\vbl(A_j)$ is not in $S_j$. Let $N(A_j) =
\{A_t : \vbl(A_t) \cap \vbl (A_j) \neq \emptyset\}$, and assume that
each $N(A_j)$ is finite. Then we have the following.

\begin{lov}
Suppose the above hypotheses hold and there exist $r_0,r_1,\ldots \in
(0,1)$ such that
$$
\pr[A_j] \leq r_j \cdot \prod_{\substack{A_t \in N(A_j) \\ t \neq j}} (1-r_t)
$$
for all $j$. Then there is an assignment of $x_0,x_1,\ldots$ that
avoids every $A_j$.
\end{lov}

Moser and Tardos~\cite{MT} gave an efficient algorithm for finding
such an assignment for $x_0, x_1, \dots, x_{n-1}$ in the finite
version of this theorem. As noted in~\cite{RS}, Fortnow then
conjectured that an effective version of the theorem should also
hold. This conjecture was confirmed as follows.

Let $x_0,x_1,\ldots$ and $A_0, A_1,\ldots$ be as above. Assume that
the function $f$ bounding the ranges of the $x_n$ is computable, and
that the $x_n$ have uniformly computable rational-valued probability
distributions. Assume also that the $A_j$ are uniformly computable
(i.e., that there is a computable procedure that, given $j$, returns
$\vbl(A_j)$ and the set $S_j$ as above). The assumption that each
$N(A_j)$ is finite means that each $n$ is in $\vbl(A_j)$ for only
finitely many $j$. Assume that we have a procedure for computing a
canonical index of this finite set given $j$. The following result is
the effective version of the Lov\'asz Local Lemma, whose proof first
appeared in Rumyantsev~\cite{R} and was subsequently published in
Rumyantsev and Shen~\cite{RS}. Note that the hypothesis of the
effective version is a bit stronger than that of the original version,
as the upper bound on Pr$[A_j]$ in the original version is multiplied
by the factor $q < 1$ to obtain the upper bound in the effective
version.

\begin{thm}[Rumyantsev and Shen~\cite{R,RS}]
\label{RSthm}
Suppose the above hypotheses hold and there are $q \in \mathbb Q \cap
(0,1)$ and a computable sequence $r_0,r_1,\ldots \in \mathbb Q \cap
(0,1)$ such that
$$
\pr[A_j] \leq qr_j \cdot \prod_{\substack{A_t \in N(A_j) \\ t \neq j}} (1-r_t)
$$
for all $j$. Then there is a computable assignment of $x_0,x_1,\ldots$
that avoids every $A_j$.
\end{thm}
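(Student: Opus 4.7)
The plan is to effectivize the Moser--Tardos resampling analysis of the classical LLL, using the factor $q<1$ to convert the probabilistic existence proof into a computable construction. Moser and Tardos showed that starting from a random assignment and repeatedly resampling any currently violated event halts in finite expected time, with each $A_j$ resampled at most $r_j/(1-r_j)$ times in expectation. In the infinite computable setting, rather than simulating this random process, I would build $h$ deterministically one variable at a time, committing to a value $h(n)$ only when an effective search has produced sufficient evidence that the commitment can be extended to a full good assignment.

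Concretely, at stage $n$, with $h(0),\ldots,h(n-1)$ already committed, I would enumerate candidate values $v\leq f(n)$ and, for each, enumerate finite index sets $F$, searching for a rational certificate that (i) no event $A_j$ with $\vbl(A_j)\subseteq\{0,\ldots,n\}$ is violated by $(h(0),\ldots,h(n-1),v)$, and (ii) under the conditional distribution on the remaining variables given these commitments, a truncated LLL-style invariant using only the events indexed by $F$ continues to hold for every event depending on later variables. The role of $q$ is precisely to absorb the approximation error introduced by truncating to the finite set $F$: because the hypothesis provides uniform multiplicative slack $q$ in every LLL estimate, rational approximations obtained from a sufficiently large but finite and uniformly computable portion of the dependency structure will suffice to verify (ii), whereas under the classical hypothesis no finite approximation could.

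The main obstacle will be formulating and carrying forward the right effective invariant. By analogy with the standard inductive proof of the classical LLL, the invariant should say that for every finite set $S$ of event indices and every $j\notin S$, the conditional probability of $A_j$ given our commitments and given avoidance of the events indexed by $S$ is bounded by $r_j$; one must then show that after committing $h(n)$ this invariant can be restored at stage $n+1$, and that a finite certificate for its restoration can be found effectively and bounded in size in terms of $n$. Assuming this is in place, no $A_j$ can ever be violated in the limit, since its conditional probability given the committed values would then have to be $1$ rather than bounded strictly below $1$ as the invariant requires. I expect the most delicate part to be showing that the slack from $q$, although partially consumed each time we commit to a value of $h(n)$, can be renewed at every stage via a fresh choice of the truncation $F$, so that the inductive margin never collapses; this is the place where the strict inequality in the hypothesis is genuinely essential, and where the proof must differ most sharply from the classical existence argument.
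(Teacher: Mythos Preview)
The paper does not supply a proof of this theorem at all: it is stated as a result of Rumyantsev and Shen with citations to~\cite{R,RS}, and the paper only uses its corollaries. So there is no ``paper's own proof'' to compare against; the relevant comparison is with the argument actually given in~\cite{RS}.

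Relative to that argument, your plan diverges at the key technical step. Rumyantsev and Shen do not build $h$ greedily by committing one variable at a time against a conditional-LLL invariant. Instead they stay with the Moser--Tardos resampling process: they run it on growing finite subproblems, and use the witness-tree bound together with the extra factor $q$ to show that the probability any fixed variable $x_n$ is resampled after a computably bounded stage is strictly less than~$1$. This is what lets them read off $h(n)$ computably, by exhaustive search over the finitely many random bits used up to that stage. The slack $q$ enters as a uniform geometric factor in the witness-tree sum, not as room for truncation error in a conditional-probability certificate.

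Your proposal has a genuine gap at the invariant. You want to maintain, after committing $h(0),\ldots,h(n-1)$, that $\pr[A_j \mid \text{commitments} \wedge \bigcap_{i\in S}\neg A_i]\leq r_j$. But conditioning on \emph{specific values} of variables is not the kind of conditioning the inductive LLL bound controls: fixing $x_0,\ldots,x_{n-1}$ can push the conditional probability of an event $A_j$ with $\vbl(A_j)\cap\{0,\ldots,n-1\}\neq\emptyset$ arbitrarily close to~$1$, well above $r_j$, even when no event is yet fully violated. You acknowledge that the ``right effective invariant'' is the main obstacle, but the one you wrote down does not survive even a single commitment step, and you have given no argument that a value $v$ restoring any workable invariant can always be found by finite search. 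That is exactly the content the Moser--Tardos witness-tree analysis supplies in~\cite{RS}, and your sketch does not replace it.
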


The following consequence of this result is a slightly restated
version of one given in~\cite[Corollary 7.2]{RS}. For a finite partial
function $\sigma$, the \emph{size} of $\sigma$ is
$|\dom(\sigma)|$. When we say that a sequence
$\sigma_0,\sigma_1,\ldots$ of finite partial functions is computable,
we mean that there is a computable procedure that, given $i$, returns
$\dom(\sigma_i)$ and the values of $\sigma_i$ on this domain.

\begin{cor}[Rumyantsev and Shen~\cite{RS}] 
\label{rscor}
For each $q \in (0,1)$ there is an $M$ such that the following
holds. Let $\sigma_0,\sigma_1,\ldots$ be a computable sequence of
finite partial functions $\mathbb N \rightarrow 2$, each of size at
least $M$. Suppose that for each $m \geq M$ and $n$, there are at most
$2^{qm}$ many $j$ such that $\sigma_j$ has size $m$ and $n \in
\dom(\sigma_j)$, and that we can computably determine the set of all
such $j$ given $m$ and $n$. Then there is a computable $c : \mathbb N
\rightarrow 2$ such that for each $j$ there is an $n \in
\dom(\sigma_j)$ with $c(n)=\sigma_j(n)$.
\end{cor}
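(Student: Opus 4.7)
The plan is to apply Theorem~\ref{RSthm} with i.i.d.\ fair Bernoulli variables $x_0, x_1, \ldots$ on $\{0,1\}$, taking as the bad event $A_j$ the statement that $x_n = 1 - \sigma_j(n)$ for \emph{every} $n \in \dom(\sigma_j)$. Then $\vbl(A_j) = \dom(\sigma_j)$ and $\pr[A_j] = 2^{-m_j}$, where $m_j = |\sigma_j|$. Avoiding every $A_j$ means precisely that for each $j$ some $n \in \dom(\sigma_j)$ satisfies $x_n = \sigma_j(n)$, so the computable assignment produced by Theorem~\ref{RSthm} will serve as the required $c$.

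To verify the hypothesis of Theorem~\ref{RSthm}, pick $\alpha$ with $q < \alpha < 1$ and set $r_j := 2^{-\alpha m_j}$. The neighbors of $A_j$ are the $A_t$ with $\dom(\sigma_t) \cap \dom(\sigma_j) \neq \emptyset$. By the counting hypothesis, each $n \in \dom(\sigma_j)$ contributes at most $2^{qm}$ neighbors of size $m \geq M$, giving
$$
\sum_{\substack{A_t \in N(A_j) \\ t \neq j}} r_t \;\leq\; m_j \sum_{m \geq M} 2^{(q-\alpha)m} \;=\; m_j \cdot \frac{2^{(q-\alpha)M}}{1 - 2^{q-\alpha}} \;=:\; m_j \cdot C_M,
$$
where $C_M \to 0$ as $M \to \infty$. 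Using $1 - x \geq e^{-2x}$ (valid for small $x$), the product in the LLL hypothesis is bounded below by $e^{-2 m_j C_M}$, so the hypothesis reduces to
$$
\bigl(2^{-(1-\alpha)} \cdot e^{2 C_M}\bigr)^{m_j} \;\leq\; q',
$$
where $q' \in (0,1)$ is the slack constant from Theorem~\ref{RSthm}. Since $\alpha < 1$, taking $M$ large enough makes the base strictly less than $1$; since $m_j \geq M$, one can then set $q' := \bigl(2^{-(1-\alpha)} e^{2 C_M}\bigr)^M \in (0,1)$, and the inequality holds for every $j$. This fixes the threshold $M = M(q)$ promised by the corollary.

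The effectivity conditions needed by Theorem~\ref{RSthm} are read off from the corollary's hypotheses: the events $A_j$ are uniformly computable from the computable sequence $(\sigma_j)$, the weights $r_j = 2^{-\alpha m_j}$ form a uniformly computable sequence of rationals, and for each $n$ the finite set of $j$'s with $n \in \dom(\sigma_j)$ is enumerated by iterating over sizes $m \geq M$ and invoking the given procedure. The main obstacle is the weight choice: once one realizes that $r_j = 2^{-\alpha m_j}$ with $\alpha \in (q,1)$ converts the counting bound $2^{qm}$ into a convergent geometric series, the rest of the estimate is routine and determines $M$ in terms of $q$.
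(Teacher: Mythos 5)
Your encoding (fair coin bits, bad event $A_j$ saying $x_n = 1-\sigma_j(n)$ for all $n \in \dom(\sigma_j)$) and your weight choice $r_j \approx 2^{-\alpha m_j}$ with $q<\alpha<1$ are the natural route, and the numerical estimate (the geometric sum $m_j C_M$, the bound $\prod(1-r_t)\geq e^{-2m_jC_M}$, and the choice of $M$) is fine as far as it goes. The genuine gap is in the paragraph where you claim the effectivity conditions of Theorem~\ref{RSthm} ``are read off from the corollary's hypotheses.'' Theorem~\ref{RSthm}, as stated in this paper, requires each $N(A_j)$ to be \emph{finite} --- equivalently, that each $n$ lies in $\vbl(A_j)$ for only finitely many $j$ --- together with a procedure computing a canonical index for that finite set. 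The hypothesis of Corollary~\ref{rscor} gives this only size-by-size: for each fixed $m\geq M$ there are at most $2^{qm}$ many $j$ with $|\sigma_j|=m$ and $n\in\dom(\sigma_j)$, but nothing prevents a single position $n$ from lying in the domains of infinitely many $\sigma_j$ of ever-larger sizes. (For instance, one pattern of each size $m\geq M$, all containing position $0$, satisfies the counting and computability hypotheses, yet then every $N(A_j)$ is infinite.) Your proposed procedure, ``iterate over sizes $m\geq M$ and invoke the given procedure,'' is an unbounded search that never certifies completeness, and the set it is trying to list need not even be finite. So Theorem~\ref{RSthm} in the form quoted here simply does not apply; this matters for the paper, since in the applications (e.g.\ Theorem~\ref{main}, with $f=\subt$) only the size-by-size bound and computability are available. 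This is precisely why the paper does not derive Corollary~\ref{rscor} from Theorem~\ref{RSthm} by a short argument but cites it as Corollary 7.2 of Rumyantsev--Shen, whose proof is set up to handle patterns of unboundedly many sizes meeting a fixed position; closing your gap essentially amounts to reproving a strengthening of Theorem~\ref{RSthm} in which a variable may occur in infinitely many events provided the weight sums over neighborhoods converge computably.

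Two smaller points. First, Theorem~\ref{RSthm} requires the $r_j$ to be rationals; $2^{-\alpha m_j}$ is generally irrational, so you should take, say, $r_j=2^{-\lceil \alpha m_j\rceil}$, which is dyadic and costs only a factor $2$ in the lower bound on $r_j$, absorbed by the geometric decay in $m_j\geq M$. Second, the inequality $1-x\geq e^{-2x}$ needs $x\leq \tfrac12$, which you should note holds because $r_t\leq 2^{-\alpha M}$ once $M$ is large; this is easily added and does not affect the choice of $M=M(q)$.
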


From this result it is easy to conclude the following fact, which is
the one we will use in the the next section. To obtain it, apply
Corollary \ref{rscor} to the sequence $\sigma_0, \sigma_1, \dots$,
where $\sigma_{2j}$ and $\sigma_{2j+1}$ each have domain $F_j$, and
$\sigma_{2j}(x) = 0$ and $\sigma_{2j+1}(x) = 1$ for all $x \in F_j$,
choosing $q$ in Corollary \ref{rscor} to be greater than the given $q$
for Corollary \ref{usecor} below.

\begin{cor}
\label{usecor}
For each $q \in (0,1)$ there is an $M$ such that the following
holds. Let $F_0,F_1,\ldots$ be a computable sequence of finite sets,
each of size at least $M$. Suppose that for each $m \geq M$ and $n$,
there are at most $2^{qm}$ many $j$ such that $|F_j|=m$ and $n \in
F_j$, and that we can computably determine a canonical index for the
set of all such $j$ given $m$ and $n$. Then there is a computable $c :
\mathbb N \rightarrow 2$ such that for each $j$ the set $F_j$ is not
homogeneous for $c$.
\end{cor}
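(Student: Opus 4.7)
The plan is to reduce Corollary \ref{usecor} directly to Corollary \ref{rscor} via the doubling trick indicated in the excerpt. Given $q \in (0,1)$, I would fix some $q' \in (q,1)$ — for concreteness $q' = (q+1)/2$ — and let $M'$ be the constant supplied by Corollary \ref{rscor} for this $q'$. Then set $M = \max(M', \lceil 1/(q'-q)\rceil)$; the role of the second term in the max will be to absorb the factor of $2$ that the doubling introduces in the counting hypothesis.

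Given the computable sequence $F_0, F_1, \ldots$ satisfying the hypothesis of Corollary \ref{usecor} with this $M$, I would define a computable sequence of finite partial functions $\sigma_0, \sigma_1, \ldots : \mathbb N \to 2$ by putting $\dom(\sigma_{2j}) = \dom(\sigma_{2j+1}) = F_j$, with $\sigma_{2j} \equiv 0$ and $\sigma_{2j+1} \equiv 1$ on $F_j$. Each $\sigma_i$ has size at least $M \geq M'$, and computability is inherited from the computability of the $F_j$. The point of the doubling is that if $c$ is such that for every $i$ some $n \in \dom(\sigma_i)$ has $c(n) = \sigma_i(n)$, then for each $j$ the set $F_j$ contains some $n$ with $c(n) = 0$ (taking $i = 2j$) and some $n$ with $c(n) = 1$ (taking $i = 2j+1$), so $F_j$ is not homogeneous for $c$.

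The only thing to check is the counting bound of Corollary \ref{rscor} with parameter $q'$. For each $m \geq M$ and each $n$, the number of indices $i$ with $|\sigma_i| = m$ and $n \in \dom(\sigma_i)$ is exactly twice the number of $j$ with $|F_j| = m$ and $n \in F_j$, and hence is at most $2 \cdot 2^{qm} = 2^{qm+1}$. For this to be at most $2^{q'm}$ I need $qm + 1 \leq q'm$, i.e., $m \geq 1/(q'-q)$, which holds by the choice of $M$. Moreover, a canonical index for the set of such $i$ can be computed from one for the corresponding set of $j$, so the computable-determination hypothesis carries over.

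There is essentially no obstacle here beyond correctly handling the extra factor of $2$ in the counting. The only small wrinkle is that one must select $q' > q$ strictly (rather than reusing $q$) precisely so that the additive $+1$ in the exponent $qm+1$ can be absorbed for all $m$ beyond the constant $M$, and this is exactly the price paid for reducing Corollary \ref{usecor} to the "agreement" version in Corollary \ref{rscor}. Applying Corollary \ref{rscor} with parameter $q'$ then yields a computable $c : \mathbb N \to 2$ with the desired property, completing the proof.
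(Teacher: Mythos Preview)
Your proposal is correct and follows essentially the same approach as the paper: the paper's proof is just the one-line instruction to apply Corollary~\ref{rscor} to the doubled sequence $\sigma_{2j}\equiv 0$, $\sigma_{2j+1}\equiv 1$ on $F_j$, with the $q$ of Corollary~\ref{rscor} taken strictly larger than the given $q$. You have simply spelled out explicitly how to pick $q'$ and how large $M$ must be to absorb the extra factor of $2$, which the paper leaves implicit.
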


\section{The effective content of HT$^{=2}_2$ and some generalizations}
\label{results}

The next result will be considerably generalized in Theorem
\ref{main}. Nonetheless, we include it here to illustrate an
application of Corollary \ref{usecor} in a simple context. The proof
of Theorem \ref{main} will have the same basic idea but will also
involve computable approximations to $\Sigma^0_2$ sets and
addition-like functions replacing addition.

\begin{thm}
\label{comp}
The principle $\textup{HT}^{=2}_2$ is not computably true. That is,
it has a computable instance with no computable solution.
\end{thm}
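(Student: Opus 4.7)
The plan is to construct a computable coloring $c : \mathbb{N} \to 2$ by applying Corollary \ref{usecor} to a carefully chosen sequence of finite sets, and then to argue that no computable set can be HT$^{=2}_2$-homogeneous for $c$. Fix some $q \in (0,1)$ and let $M$ be the threshold supplied by Corollary \ref{usecor}, enlarging $M$ if necessary so that $m \leq 2^{qm}$ holds for all $m \geq M$. Set $k_i = M + i$, so the sizes are distinct and all at least $M$. Following the outline in the introduction, for each c.e.\ index $i$ I would commit to a set $E_i \subseteq W_i$ of size $k_i$ and insert every translate $E_i + s$ into the sequence fed to the corollary; the idea is that if $W_i$ were an HT$^{=2}_2$-solution for $c$, then for any $s \in W_i$ with $s > \max(E_i)$, homogeneity of $W_i$ would force $c$ to be constant on $E_i + s$, contradicting the conclusion of the corollary.

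To keep the construction effective, I would not take $E_i$ to be the first $k_i$ elements of $W_i$ directly, since whether this is defined is only $\Sigma^0_1$. Instead, I would impose a computable deadline: let $E_i$ be the first $k_i$ elements enumerated into $W_i$ within $2^i$ stages, declaring $E_i$ undefined otherwise. This makes ``$E_i$ is defined'' decidable, and makes $E_i$ (when defined) a computable function of $i$. The sequence $F_0, F_1, \ldots$ is then the effective enumeration, in some canonical order, of $\{E_i + s : E_i \text{ is defined},\, s \in \mathbb{N}\}$. The hypotheses of Corollary \ref{usecor} then follow: each $|F_j| = k_i \geq M$; for fixed $m \geq M$ and $n$, distinctness of the $k_i$'s forces $i = m - M$ as the only contributor, and the relevant $s$'s form the set $\{n - e : e \in E_{m-M}\}$, of size at most $m \leq 2^{qm}$; and the corresponding canonical index is computable from $(m,n)$ thanks to the $2^i$ deadline.

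Applying Corollary \ref{usecor} then yields a computable $c$ that is non-constant on every $F_j$. To finish, suppose $H$ is a computable HT$^{=2}_2$-solution. Using the characteristic function of $H$ together with padding, I would produce an index $i^*$ for $H$ whose enumeration procedure outputs at least $k_{i^*}$ elements of $H$ within $2^{i^*}$ stages; this is achievable because for a computable $H$ the time required to list the first $k_{i^*}$ elements is eventually dominated by $2^{i^*}$. For this $i^*$, $E_{i^*}$ is defined and contained in $H$, and choosing any $s \in H$ with $s > \max(E_{i^*})$ yields the contradiction indicated above. The main obstacle is that the data fed to the Lov\'asz Local Lemma must genuinely be computable (sequence, sizes, and counting function), not merely c.e., and reconciling this with the need to spoil every computable $H$ is precisely what the $2^i$-stage deadline together with the post hoc padding argument accomplishes.
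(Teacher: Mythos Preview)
Your overall strategy---feeding translates $E_i + s$ of finite pieces of the $W_i$ into Corollary~\ref{usecor} and then deriving a contradiction from any computable solution---is exactly the paper's approach, and your verification of the corollary's hypotheses (each size $m$ arises from the unique $i = m - M$, and at most $m$ translates of $E_i$ contain a given $n$) is correct.

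The gap is in the final padding step. Under the standard convention that $W_e[s] \subseteq \{0,\ldots,s-1\}$, any index $i$ with $|W_i[2^i]| \geq M+i$ must enumerate $M+i$ elements of $W_i$ that all lie below $2^i$. But a computable infinite $H$ can be too sparse for this ever to happen: if, for instance, $H = \{2^{2^n} : n \in \mathbb N\}$, then $H$ has only about $\log_2 i$ elements below $2^i$, so \emph{no} c.e.\ index $i$ with $W_i \subseteq H$ can have $E_i$ defined under your $2^i$-stage deadline. Padding changes the index but not the elements that must be enumerated, so it cannot help here; and even without the convention on $W_e[s]$, the time needed to locate $M+i$ elements of a sparse $H$ via its characteristic function can exceed $2^i$ for every $i$. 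Thus your construction does not rule out such sparse $H$ as solutions, and the argument is incomplete.

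The paper sidesteps this by dropping the deadline entirely: rather than deciding in advance whether $E_i$ exists, it inserts $E_i + s$ into the sequence only for those pairs $(i,s)$ with $|W_{i,s}| \geq k_i$, so that $E_i$ is already known at the moment of insertion. This keeps the sequence computable (at the pair $(i,s)$ one knows $W_{i,s}$), and since every infinite $W_i$ eventually accumulates $k_i$ elements, the translates $E_i + s$ for all sufficiently large $s$ still appear. No padding or post hoc index-finding is needed: every infinite c.e.\ set is defeated directly at its own index.
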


\begin{proof}
We follow the outline of the proof given in the introduction. Let $M$
be as in Corollary \ref{usecor} for $q = \frac{1}{2}$, where we assume
without loss of generality that $m \leq 2^{\frac{m}{2}}$ for all $m
\geq M$. For each $i$, let $k_i = M + i$. For each $i$ with $|W_i|
\geq k_i$, let $E_i$ consist of the first $k_i$ many elements
enumerated into $W_i$, and let $E_i$ be undefined if $|W_i| <
k_i$. Let $F_0, F_1, \ldots$ be a computable enumeration without
repetitions of all finite sets of the form $E_i + s$ (over all $i, s
\in \mathbb N$) such that $W_i$ contains at least $k_i$ many elements
by stage $s$ (so that $E_i$ is known by stage $s$). Clearly, if $E_i$
is defined, then for all sufficiently large $s$ the set $E_i + s$
occurs in the sequence $F_0, F_1, \ldots$, and conversely, every set in
the sequence $F_0, F_1, \ldots$ of cardinality $k_i$ has the form $E_i
+ s$ for some $s$.

As explained in the introduction, it suffices to show that Corollary
\ref{usecor} applies to the sequence $F_0, F_1, \ldots$, since this
corollary then gives the existence of a computable coloring $c :
\mathbb N \rightarrow 2$ such that no $F_j$ is homogeneous for $c$. It
follows that for all $i$ with $E_i$ defined, if $s$ is sufficiently
large then $E_i + s$ is not homogeneous for $c$, so no solution to
$c$ can contain $E_i$, and in particular $W_i$ is not a solution to
$c$.

We now verify that the hypotheses of Corollary \ref{usecor} are
satisfied. Let $m \geq M$ and $n$ be given. We claim that there are at
most $m$ many values of $j$ such that $|F_j|=m$ and $n \in F_j$. Let
$i = m - M$, so that $|E_i| = k_i = m$. The claim asserts that there
are most $m$ many values of $s$ such that $E_i + s$ occurs in the
sequence $F_0, F_1, \ldots$ and $n \in E_i + s$. If $n \in E_i + s$,
then $n = x + s$ for some $x \in E_i$. There are $m$ many choices for
$x$ and for each $x$ there is a unique $s$ with $n = x + s$, so the
claim is proved. Since $m \leq 2^{\frac{m}{2}}$ by the choice of $M$,
there are at most $2^{\frac{m}{2}}$ many values of $j$ such that
$|F_j|=m$ and $n \in F_j$. It remains to check that the set of such
$j$ can be effectively computed from $m$ and $n$. Again, let $i = m -
M$. We must effectively compute the canonical index of the set $S$ of
$s$ such that $W_i$ contains at least $m$ many elements by stage $s$
and $n \in E_i + s$. If $n \in E_i + s$, then $s \leq n$. So for each
$s \leq n$ we can check effectively whether $W_i$ contains at least
$m$ many elements by the end of stage $s$. If not, $s \notin S$. If
so, we can effectively compute $E_i$ and then effectively determine
whether $n \in E_i + s$, and hence whether $s \in S$. Hence, we can
apply Corollary \ref{usecor} as described in the previous paragraph.
\end{proof}

A function $f$ is \emph{diagonally noncomputable} (\emph{DNC})
relative to an oracle $X$ if $f(e) \neq \Phi^X_e(e)$ for all $e$ such
that $\Phi^X_e(e)$ is defined, where $\Phi_e$ is the $e$th Turing
functional. A degree is \emph{DNC} relative to $X$ if it computes a
function that is DNC relative to $X$. An infinite set $A$ is
\emph{effectively immune} relative to $X$ if there is an
$X$-computable function $f$ such that if $W^X_e \subseteq A$ then
$|W^X_e| < f(e)$, where $W_e$ is the $e$th enumeration operator.

\begin{thm}[Jockusch~\cite{Jfpf}]
\label{Jthm}
A degree is DNC relative to $X$ if and only if it computes a set that
is effectively immune relative to $X$.
\end{thm}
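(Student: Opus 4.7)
The plan is to prove the two implications separately via direct constructions, using the $s$-$m$-$n$ theorem as the main tool in both directions.

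For $(\Leftarrow)$, suppose $A$ is infinite and effectively immune relative to $X$ with $X$-computable witness $h$. To build $g \leq_T A \oplus X$ that is DNC relative to $X$, apply $s$-$m$-$n$ for each input $e$ to obtain an index $\alpha(e)$ for an $X$-c.e.\ set that enumerates the numbers $0, 1, 2, \ldots$ in sequence and halts the moment $\Phi^X_e(e)$ converges; thus the cardinality of this set equals the convergence stage of $\Phi^X_e(e)$ (or is infinite if $\Phi^X_e(e)$ diverges). Testing via the $A$-oracle whether the initial segment of length $h(\alpha(e))$ lies inside $A$, effective immunity then converts this into a computable upper bound on the convergence stage of $\Phi^X_e(e)$ whenever convergent; we then evaluate $\Phi^X_e(e)$ and set $g(e) = \Phi^X_e(e) + 1$, with a default value of $0$ in the complementary case.

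For $(\Rightarrow)$, suppose $f$ is DNC relative to $X$. I build an effectively immune set $A \leq_T f \oplus X$ by encoding $f$ inside $A$; a first attempt is $A = \{\langle e, f(e) \rangle : e \in \mathbb{N}\}$, the graph of $f$. Given any $X$-c.e.\ set $W^X_e \subseteq A$, its enumeration induces a partial $X$-computable function $\psi_e \subseteq f$, and $s$-$m$-$n$ padding produces an index $\gamma(e)$ at which the DNC equation $f(\gamma(e)) \neq \Phi^X_{\gamma(e)}(\gamma(e))$ forces $\psi_e(\gamma(e))$ to be undefined, yielding an element of $A$ missing from $W^X_e$ at a computable location. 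The crucial refinement is upgrading this qualitative statement to a quantitative bound $h(e)$: either by iterating the diagonalization along a uniformly computable family of candidate indices, or by replacing $f$ with a padded DNC function whose values are encoded in blocks of $A$ whose sizes provide the bound.

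The main obstacle is the $(\Rightarrow)$ direction, because the naive graph construction gives only ordinary immunity: for each $e$, we extract a single missing graph-pair, not a bound on $|W^X_e|$. Upgrading to effective immunity requires a more intricate encoding scheme, combining $s$-$m$-$n$ padding with a structured presentation of $A$ so that the DNC-diagonalization produces a computable size bound rather than a single missing point. Jockusch's original argument in \cite{Jfpf} supplies precisely such a construction, which I would follow in detail.
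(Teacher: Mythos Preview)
The paper does not itself prove Theorem~\ref{Jthm} (it is cited from \cite{Jfpf}); the closest argument in the paper is the proof of the Proposition that $2\textup{-EI}\Leftrightarrow 2\textup{-DNC}$, which is explicitly modeled on Jockusch's construction. Comparing against that, your $(\Leftarrow)$ direction has a real error. You set $W^X_{\alpha(e)}$ equal to the initial segment $\{0,1,\ldots,s-1\}$, where $s$ is the convergence stage of $\Phi^X_e(e)$, and then invoke effective immunity of $A$. But effective immunity constrains only those $X$-c.e.\ sets that are \emph{contained in} $A$; an initial segment of $\mathbb N$ has no reason to lie inside $A$, so testing whether $\{0,\ldots,h(\alpha(e))-1\}\subseteq A$ gives no information about $s$. (If $a^*$ is the least element of $\overline{A}$, your test is vacuous whenever the convergence stage exceeds $a^*$.) The argument in \cite{Jfpf} and in the paper's Proposition instead passes through fixed-point-freeness: take $g(e)$ to be an $(A\oplus X)$-computable index for the finite set consisting of the first $h(e)$ elements of $A$, observe that $W^X_{g(e)}\neq W^X_e$ for every $e$ by effective immunity, and convert this fixed-point-free $g$ into a DNC function by composing with an index function $p$ satisfying $W^X_{p(e)}=W^X_{\Phi^X_e(e)}$ whenever $\Phi^X_e(e)\converges$.

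For $(\Rightarrow)$ you correctly note that the graph of $f$ is only immune, not effectively immune, and then defer to \cite{Jfpf} rather than supplying the construction. What is needed (and what the paper's Proposition does) is to first amplify the DNC function $h$ to a $g$ with $g(e)\neq\Phi^X_i(i)$ simultaneously for all $i\leq e$, and then build $A=\{a_0<a_1<\cdots\}$ recursively by setting each $a_e$ equal to a value of $g$ at a suitably large index, obtaining effective immunity with bound $e\mapsto e+1$. Neither the amplification step nor this recursive construction appears in your proposal, so both directions remain incomplete.
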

 
Let $W^{\emptyset'}_0,W^{\emptyset'}_1,\ldots$ be an effective list of
the $\Sigma^0_2$ sets, with corresponding computable approximations
$W^{\emptyset'}_i[s]$ (chosen so that $x \in W^{\emptyset'}_i$ if{}f
for all sufficiently large $s$, we have $x \in
W^{\emptyset'}_i[s]$). We adopt the standard convention that if $x \in
W^{\emptyset'}_i[s]$ then $x < s$. For a function $f : [\mathbb N]^2
\rightarrow \mathbb N$ and $x \neq y$, we write $f(x,y)$ for
$f(\{x,y\})$. For a set $S \not\ni y$, we write $f(S,y)$ for $\{f(x,y)
: x \in S\}$.

It follows from the proof of Theorem \ref{comp} that there is a
computable instance of HT$^{=2}_2$ such that all solutions are
effectively immune relative to $\emptyset$, and hence have degrees
that are DNC relative to $\emptyset$. In the following theorem, which
is our main result, we replace $\emptyset$ by $\emptyset'$ as an
oracle and simultaneously replace addition by an arbitrary
addition-like operation as defined in Definition \ref{aldef}.

\begin{thm}
\label{main}
Let $f$ be addition-like. There is a computable instance of
$\textup{RT}^f_2$ such that the degree of any solution is DNC relative
to $\emptyset'$.
\end{thm}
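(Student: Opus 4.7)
We follow the structure of the proof of Theorem \ref{comp}, with two modifications: finite sets $E_e \subseteq W^{\emptyset'}_e$ with computable approximations $E_e[s]$ replace the c.e.\ sets $E_i$, and clauses (2)--(3) of Definition \ref{aldef} replace the arithmetic properties of addition. By Theorem \ref{Jthm}, it suffices to produce a computable instance $c$ of $\textup{RT}^f_2$ whose solutions are all effectively immune relative to $\emptyset'$; we achieve this with the computable bound $h(e) = k_e$ for a sequence $k_e$ chosen below.

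Fix $M$ from Corollary \ref{usecor} with $q = \frac{1}{2}$, let $b$ and $g$ be the constant and function from clauses (3) and (2), and choose a computable sequence $k_e$ with $k_e \geq bM$ and intervals $[k_e/b, k_e]$ pairwise disjoint, e.g.\ $k_e = b^{M+e+1}$. Let $E_e$ be the $k_e$ smallest elements of $W^{\emptyset'}_e$ if $|W^{\emptyset'}_e| \geq k_e$, and undefined otherwise. If a solution $H$ contains $E_e$, picking any $y \in H \setminus E_e$ (which exists since $H$ is infinite and $E_e$ finite) forces $c$ to be constant on $f(E_e, y) = \{f(x, y) : x \in E_e\}$, so it suffices to make $f(E_e, y)$ non-monochromatic for every such pair $(e, y)$. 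We approximate $E_e$ via a mind-change-controlled construction: admit $x$ into $E_e[s]$ only once $x \in W^{\emptyset'}_e[t]$ for every $t$ in a window growing with $x$ and ending at $s$. This ensures (i) $E_e[s] \to E_e$ when $E_e$ is defined, (ii) $E_e[s]$ is eventually undefined when $E_e$ is undefined, and (iii) the number of distinct values of $E_e[s]$, and the sizes of the elements appearing in $\bigcup_s E_e[s]$, are both polynomial in $k_e$. We then enumerate as bad sets, without repetition, all $F_j = f(E_e[s], y)$ for $(e, s, y)$ with $e, y \leq s$, $E_e[s]$ defined, and $y \notin E_e[s]$. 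Applied with $y$ as the fixed first argument, clause (3) bounds fibers of $f(\cdot, y)$ by $b$, giving $|F_j| \geq k_e/b \geq M$.

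The key step is verifying the counting and computability hypotheses of Corollary \ref{usecor}. Fix $m \geq M$ and $n$; by disjointness of the intervals $[k_e/b, k_e]$, at most one $e$ produces a bad set of size $m$. For this $e$, any such bad set is $f(E, y)$ with $n = f(x, y)$ for some $x \in E$; clause (3) bounds to $b$ the number of such $x$ given $(y, n)$ and to $b$ the number of such $y$ given $(x, n)$, and clause (2) bounds each such $y$ by $g(x, n)$. Combined with property (iii), the count of bad sets of size $m$ containing $n$ is polynomial in $k_e$ (hence in $m$), well below $2^{qm} = 2^{m/2}$ for large $m$; moreover property (iii) lets us compute a stage by which all relevant $F_j$'s have been enumerated, yielding the required canonical index uniformly in $(m,n)$. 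The main obstacle is rigorously carrying out the mind-change analysis that establishes (i)--(iii) for the approximation; once this is done, Corollary \ref{usecor} produces a computable $c$ avoiding all $F_j$, every solution $H$ to $c$ satisfies $E_e \not\subseteq H$ for each $e$ with $E_e$ defined, so $H$ is effectively immune relative to $\emptyset'$ via $h$, and hence has DNC degree relative to $\emptyset'$ by Theorem \ref{Jthm}.
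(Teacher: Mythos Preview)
Your overall strategy---build a computable $c$ whose solutions are effectively immune relative to $\emptyset'$ via Corollary~\ref{usecor}---matches the paper's, but the implementation has a genuine gap at property~(iii). No computable approximation scheme $E_e[s]$ can have its number of distinct values, or $|\bigcup_s E_e[s]|$, bounded polynomially (or even computably) in $k_e$: a $\Sigma^0_2$ set $W^{\emptyset'}_e$ with $|W^{\emptyset'}_e|<k_e$ may have an approximation that cycles through infinitely many pairwise disjoint $k_e$-element sets, each held long enough to pass any prescribed window test, so that your $E_e[s]$ takes infinitely many distinct values and $\bigcup_s E_e[s]$ is infinite. Since both your counting bound and your computability argument for the hypotheses of Corollary~\ref{usecor} rest on~(iii), they collapse. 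The ``main obstacle'' you flag is not a matter of working out details---it is an obstruction.

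The paper sidesteps this entirely by a different mechanism. It enumerates only sets of the form $f(E_i[s],s)$, taking the partner $y$ to be the stage $s$ itself, and imposes a freshness condition: $\min f(E_i[s],s)$ must exceed both the start $s_0$ of the current interval of constancy of $E_i[\cdot]$ and $\max f(E_i[u],u)$ for every earlier $u<s_0$ with $E_i[u]\neq\emptyset$. These conditions force any two enumerated sets $f(E_i[s],s)$ and $f(E_i[t],t)$ that share an element to satisfy $E_i[s]=E_i[t]$, so for fixed $n$ and $i$ the count reduces to the number of $s$ with $n\in f(E,s)$ for a \emph{single} value $E$, which is at most $b|E|$ by clause~(3). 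No global mind-change bound is needed. This also explains why your decision to enumerate $f(E_e[s],y)$ for all $y\leq s$ is both unnecessary---since any solution $H$ is infinite, it suffices that $f(E_e,y)$ be non-monochromatic for cofinitely many $y$---and harmful, as it multiplies the sets to be counted with no compensating structure.
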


\begin{proof}
Let $b$ be a constant witnessing that $f$ is addition-like, as in part
(3) of Definition~\ref{aldef}. Note that the fact that $f$ is
addition-like implies that if $F$ is a finite set and $x \notin F$,
then for all but finitely many $y$, we have $\min f(F,y) > \max
f(F,x)$. Let $M$ be as in Corollary~\ref{usecor} for
$q=\frac{1}{2}$. We may assume that $M>0$ and $M$ is sufficiently
large so that $bm^2 \leq 2^{\frac{m}{2}}$ for all $m \geq
M$.

Given $i$ and $s$, for each $x \in W^{\emptyset'}_i[s]$, let $t_x$ be
the least $t$ such that $x \in W^{\emptyset'}_i[u]$ for all $u \in
[t,s]$. (I.e., $t_x$ measures how long $x$ has been in
$W^{\emptyset'}_i$.) Order the elements of $W^{\emptyset'}_i[s]$ by
letting $x \prec y$ if either $t_x<t_y$ or both $t_x=t_y$ and
$x<y$. Let $E_i[s]$ be the set consisting of the least $b(M+i)$ many
elements of $W^{\emptyset'}_i[s]$ under this ordering, or
$E_i[s]=\emptyset$ if $W^{\emptyset'}_i[s]$ has fewer than $b(M+i)$ many
elements.\footnote{This definition could be simplified by noting that
there is a partial $\emptyset'$-computable function $\psi$ such that
if $|W^{\emptyset'}_i| \geq b(M+i)$ then $\psi(i)$ is the canonical
index of a set $E_i \subseteq W^{\emptyset'}_i$ such that
$|E_i|=b(M+i)$. The limit lemma then gives us a computable binary
function $g$ such that $\psi(i)=\lim_s g(i,s)$ for all $i$ such that
$\psi(i)$ is defined, and we can define $E_i[s]$ to be the set with
canonical index $g(i,s)$ if this set has size $b(M+i)$, and
$E_i[s]=\emptyset$ otherwise. However, the current definition will
make it easier to describe the adaptation of this proof to one over
RCA$_0$ in the next section.}

The following properties of this definition are the ones that matter
to us:
\begin{enumerate}

\item The function taking $i$ and $s$ to $E_i[s]$ is computable. 

\item Every element of $E_i[s]$ is less than $s$, so $f(E_i[s],s)$ is
defined.

\item If $E_i[s] \neq \emptyset$ then $|f(E_i[s],s)| \geq M+i$.

\item If $|W^{\emptyset'}_i| \geq b(M+i)$ then there
is a $t$ such that $E_i[t] \neq \emptyset$ and $E_i[s]=E_i[t]
\subseteq W^{\emptyset'}_i$ for all $s \geq t$.

\end{enumerate}

We build a computable sequence of finite sets $F_0,F_1,\ldots$ as
follows. Order the pairs $i,s$ via a standard pairing function, and go
through each such pair in order. If $E_i[s] = \emptyset$ then proceed
to the next pair. Otherwise, let $s_0$ be least such that
$E_i[t]=E_i[s]$ for all $t \in [s_0,s]$. Suppose that the following
hold.
\begin{enumerate}

\item $\min f(E_i[s],s)>s_0$.

\item If $u<s_0$ and $E_i[u] \neq \emptyset$ then $\min f(E_i[s],s) >
\max f(E_i[u],u)$. 

\end{enumerate}
Then add $f(E_i[s],s)$ to our sequence. We say that $f(E_i[s],s)$ was
enumerated into our sequence by $i$. Otherwise do nothing. In
any case, proceed to the next pair.

Notice that if there is an $s_0$ such that $E_i[s]=E_i[s_0] \neq
\emptyset$ for all $s \geq s_0$, then we add $f(E_i[s],s)$ to our
sequence for all sufficiently large $s$, because for each $u<s_0$ and
$n \leq \max f(E_i[u],u)$, there are only finitely many $s$ such that
$n \in f(E_i[s_0],s)$, and similarly for each $n \leq s_0$.

Now $F_0,F_1,\ldots$ is a computable sequence of finite sets, each of
size at least $M$. Suppose that $n \in F_j$ and $|F_j|=m$. Then $F_j$
was enumerated by some $i < m$. (Actually $i \leq m-M$.) If $n$ is
also in $F_l$ and $F_l$ was also enumerated by $i$, then we must have
$F_j=f(E_i[s],s)$ and $F_l=f(E_i[t],t)$ for some $s$ and $t$ such that
$E_i[t]=E_i[s]$. For each $x \in E_i[s]$, there are at most $b$ many
$t$ such that $f(x,t)=n$, so there are at most $bm$ many such
$l$. Thus the total number of elements of size $m$ in our sequence
that contain $n$ is at most $bm^2 \leq 2^{\frac{m}{2}}$.

By part (2) of Definition~\ref{aldef}, given $n$ and $m$, we can
computably determine a stage $s \geq n$ such that for each $i<m$ and
$t \geq s$, we have $\min f(E_i[n],t)>n$. It follows from the
definition of our sequence that if $F$ is enumerated into it a stage
at which we are working with a pair $i,t$ with $i<m$ and $t \geq s$,
then $\min F > n$. So we can compute the set of all $j$ such that
$|F_j|=m$ and $n \in F_j$.

Thus the hypotheses of Corollary~\ref{usecor} are satisfied, and hence
there is a computable $c$ as in that corollary. Suppose that
$|W^{\emptyset'}_i| \geq b(M+i)$. Then there is an $F \subseteq
W^{\emptyset'}_i$ such that $f(F,s)$ is in our sequence for all
sufficiently large $s$. For each such $s$, there are $x,y \in F$ such
that $c(f(x,s)) \neq c(f(y,s))$, so $F$ cannot be contained in a
solution to $c$ as an instance of RT$^f_2$. Thus, if $H$ is a solution
to $c$ and $W^{\emptyset'}_i \subseteq H$, then $|W^{\emptyset'}_i| <
b(M+i)$, which means that $H$ is effectively immune relative to
$\emptyset'$, and so has DNC degree relative to $\emptyset'$.
\end{proof}

The computable instance $c$ constructed above cannot have any
$\Sigma^0_2$ solutions, since no $\Sigma^0_2$ set is effectively
immune relative to $\emptyset'$. Thus we have the following fact,
whose analogs for RT$^2_2$ and HT were proved by Jockusch~\cite{J} and
Blass, Hirst, and Simpson~\cite{BHS}, respectively.

\begin{cor}
Let $f$ be addition-like. There is a computable instance of
$\textup{RT}^f_2$ with no $\Sigma^0_2$ solution.
\end{cor}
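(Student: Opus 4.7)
The plan is to derive this as a direct consequence of Theorem \ref{main} together with a basic observation about effective immunity. Theorem \ref{main} produces a computable instance $c$ of RT$^f_2$ such that the degree of any solution $H$ is DNC relative to $\emptyset'$. In fact, reading the last paragraph of the proof of Theorem \ref{main} shows something stronger: every solution $H$ is itself effectively immune relative to $\emptyset'$, with the bound $|W^{\emptyset'}_i| < b(M+i)$ witnessing this. So it suffices to observe that no $\Sigma^0_2$ set can be effectively immune relative to $\emptyset'$, and conclude that no solution to $c$ is $\Sigma^0_2$.

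For the key observation, suppose $H$ is an infinite $\Sigma^0_2$ set. Then $H = W^{\emptyset'}_e$ for some index $e$, so $W^{\emptyset'}_e \subseteq H$ with $|W^{\emptyset'}_e| = \infty$. For any total computable function $g$ we have $|W^{\emptyset'}_e| \geq g(e)$, so no computable $g$ can witness the effective immunity of $H$ relative to $\emptyset'$. Therefore no $\Sigma^0_2$ set is effectively immune relative to $\emptyset'$, and combining with the preceding paragraph yields the corollary.

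There is no real obstacle here beyond extracting the correct conclusion from the proof of Theorem \ref{main}; one might alternatively route through Theorem \ref{Jthm} (noting that DNC-relative-to-$\emptyset'$ sets compute effectively immune sets) rather than reading effective immunity directly off the proof, but the direct route is cleaner. The corresponding statements for RT$^2_2$ and HT cited right after the corollary (Jockusch \cite{J} and Blass--Hirst--Simpson \cite{BHS}) follow the same pattern of deriving a $\Sigma^0_2$ bound from a DNC-relative-to-$\emptyset'$ lower bound.
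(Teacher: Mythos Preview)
Your proof is correct and follows the same approach as the paper: the sentence immediately preceding the corollary states exactly that the instance from Theorem~\ref{main} has no $\Sigma^0_2$ solution because no $\Sigma^0_2$ set is effectively immune relative to $\emptyset'$, and you have supplied the easy verification of that fact. One minor imprecision: effective immunity \emph{relative to $\emptyset'$} allows the bounding function $g$ to be $\emptyset'$-computable rather than merely computable, but your argument (that $|W^{\emptyset'}_e|=\infty \geq g(e)$) works for any total $g$ whatsoever, so this does not affect correctness.
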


In particular, both HT$^{=2}_2$ and RT$^{\subt}_2$ have computable
instances with no $\Sigma^0_2$ solutions. On the other hand, every
computable instance of HT$^{=2}_2$ does have a $\Pi^0_2$ solution
since the corresponding result holds for RT$^2_2$ by~\cite{J}.

Every principle RT$^f_2$ has the form $\forall X\,[\Theta(X)
\rightarrow \exists Y\, (Y \mbox{ is infinite and } \Psi(X,Y))]$
where $\Psi$ is $\Pi^0_1$. Thus we can obtain a further result from
the following general fact.

\begin{lem}
Let $P$ be a principle of the form \[\forall X\,[\Theta(X)
\rightarrow \exists Y\, (Y \mbox{ is infinite and } \Psi(X,Y))]\]
where $\Psi$ is $\Pi^0_1$. Suppose that $P$ has a
computable instance $X$ with no low solution. Then every solution to
$X$ is hyperimmune.
\end{lem}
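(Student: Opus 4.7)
The plan is to prove the contrapositive: I will show that any hyperimmune-free solution can be used to produce a low solution, contradicting the hypothesis that $X$ has no low solution.

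Suppose $Y$ is a solution to $X$ that is not hyperimmune. By definition, the principal function $p_Y$ (which is $Y$-computable) is dominated by some computable function. A little bookkeeping converts eventual domination into pointwise domination: if $g_0$ is computable with $g_0(n) \geq p_Y(n)$ for all $n \geq k$, set $g(n) = g_0(n+k)$; then $g$ is computable and $g(n) \geq p_Y(n)$ for every $n$.

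Now consider the class
$$
\mathcal{C} = \{ Z \subseteq \mathbb N : \Psi(X,Z) \text{ and } |Z \cap [0,g(n)]| \geq n+1 \text{ for all } n \}.
$$
Because $X$ and $g$ are computable and $\Psi$ is $\Pi^0_1$, $\mathcal{C}$ is a $\Pi^0_1$ class. It contains $Y$: the cardinality condition holds by the choice of $g$, and $\Psi(X,Y)$ holds because $Y$ is a solution. Moreover, every $Z \in \mathcal{C}$ has at least $n+1$ elements below $g(n)$ for all $n$, so $Z$ is infinite, and together with $\Psi(X,Z)$ this makes $Z$ a solution to $X$.

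Applying the Low Basis Theorem to the nonempty $\Pi^0_1$ class $\mathcal{C}$ produces a low member, which is a low solution to $X$. This contradicts the hypothesis, so every solution to $X$ is hyperimmune. The only delicate point is arranging that the infiniteness witness be $\Pi^0_1$ rather than merely implied, and this is handled by the cardinality condition above; beyond that, the argument is a routine application of the Low Basis Theorem.
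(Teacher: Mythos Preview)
Your proof is correct and follows essentially the same approach as the paper: build a nonempty $\Pi^0_1$ class from the non-hyperimmunity witness so that membership forces infiniteness, then apply the Low Basis Theorem. The only cosmetic difference is that the paper uses the strong-array characterization of non-hyperimmunity (a computable sequence of pairwise disjoint finite sets each meeting $Y$) in place of your principal-function bound, which yields the same $\Pi^0_1$ class argument.
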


\begin{proof}
Assume for a contradiction that $X$ has a solution $Y$ that is not
hyperimmune. Let $F_0,F_1,\ldots$ be a computable sequence of pairwise
disjoint finite sets such that $Y \cap F_i \neq \emptyset$ for all
$i$. Let $\mathcal C$ be the collection of all $Z$ such that
$\Psi(X,Z)$ holds and $Z \cap F_i \neq \emptyset$ for all $i$. Then
$\mathcal C$ is a $\Pi^0_1$ class, and is nonempty as it contains
$Y$. By the Low Basis Theorem, $\mathcal C$ has a low element. This
element is a solution to $X$, contradicting the choice of $X$.
\end{proof}

\begin{cor}
Let $f$ be addition-like. There is a computable instance of
$\textup{RT}^f_2$ such that all solutions are hyperimmune.
\end{cor}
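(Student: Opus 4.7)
The plan is to derive the corollary by feeding the instance produced by Theorem \ref{main} into the preceding lemma. Two things need checking: that $\textup{RT}^f_2$ has the required syntactic form, and that the computable instance from Theorem \ref{main} has no low solution.

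For the syntactic form, I would observe that a solution to a computable coloring $c : \mathbb N \to 2$ as an instance of $\textup{RT}^f_2$ is an infinite set $H$ such that $c(f(s)) = c(f(t))$ for all $s,t \in [H]^2$. Since $c$ and $f$ are computable, this universal condition on $H$ is $\Pi^0_1$ in $H$, so $\textup{RT}^f_2$ has exactly the form the lemma requires.

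For the no-low-solution condition, let $c$ be the computable instance produced by Theorem \ref{main}, so that every solution to $c$ has DNC degree relative to $\emptyset'$. I would then invoke the standard fact that no low set has DNC degree relative to $\emptyset'$: if $A$ were low and computed a total function $h$ that is DNC relative to $\emptyset'$, then since $A' \equiv_T \emptyset'$ the function $h$ would itself be $\emptyset'$-computable, say $h = \Phi^{\emptyset'}_{e_0}$, and the equality $h(e_0) = \Phi^{\emptyset'}_{e_0}(e_0)$ contradicts the DNC property at $e_0$. Hence $c$ has no low solution, and applying the lemma to $c$ yields that every solution to $c$ is hyperimmune. The only substantive step is this DNC-versus-low observation, but it is the standard fixed-point argument and thus presents no real obstacle; everything else is bookkeeping to fit the two previous results together.
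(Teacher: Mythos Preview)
Your proof is correct and follows exactly the intended route: verify that $\textup{RT}^f_2$ has the syntactic form required by the preceding lemma, observe that no low set can compute a function DNC relative to $\emptyset'$ (so the instance from Theorem~\ref{main} has no low solution), and apply the lemma. The paper leaves the corollary unproved, but this is precisely the argument it is set up to invoke.
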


\section{The logical strength of HT$^{=2}_2$ and generalizations}

As mentioned above, RT$^2_k$ implies RT$^f_k$ for every $f : [\mathbb
N]^2 \rightarrow k$, but does not imply WKL$_0$. Since WKL$_0$ has an
$\omega$-model consisting entirely of $\Delta^0_2$ sets, we have the
following.

\begin{cor}
Let $f$ be addition-like. Then $\textup{RT}^f_k$ is incomparable with
$\textup{WKL}_0$ over $\textup{RCA}_0$.
\end{cor}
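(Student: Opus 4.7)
The plan is to establish the two directions of incomparability separately, in each case repackaging facts already available in the excerpt rather than doing new construction.

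For the direction that $\textup{RT}^f_k$ does not imply $\textup{WKL}_0$, I would first note the observation (used already in the paragraph preceding the corollary) that $\textup{RT}^2_k$ implies $\textup{RT}^f_k$ over $\textup{RCA}_0$: given any instance $c : \mathbb N \to k$ of $\textup{RT}^f_k$, the pullback coloring $d : [\mathbb N]^2 \to k$ defined by $d(\{x,y\}) = c(f(\{x,y\}))$ is an instance of $\textup{RT}^2_k$, and any infinite $d$-homogeneous set $H$ automatically satisfies $c(f(s)) = c(f(t))$ for all $s,t \in [H]^2$, making it a solution to $c$. By Liu's theorem~\cite{Liu}, $\textup{RT}^2_k$ does not imply $\textup{WKL}_0$ over $\textup{RCA}_0$; an $\omega$-model witnessing this (one of $\textup{RT}^2_k$ in which $\textup{WKL}_0$ fails) is also an $\omega$-model of $\textup{RT}^f_k$ in which $\textup{WKL}_0$ fails.

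For the direction that $\textup{WKL}_0$ does not imply $\textup{RT}^f_k$, I would use Theorem~\ref{main}, which provides a computable instance of $\textup{RT}^f_2$ all of whose solutions have DNC degree relative to $\emptyset'$; as already observed in the corollary following Theorem~\ref{main}, such an instance has no $\Sigma^0_2$ solution and hence no $\Delta^0_2$ solution. Now invoke the standard fact (via iteration of the Low Basis Theorem) that $\textup{WKL}_0$ has an $\omega$-model whose second-order part consists entirely of $\Delta^0_2$ sets. This model contains the computable instance from Theorem~\ref{main}, but contains no solution to it, so $\textup{RT}^f_2$ fails in this model. Since for $k \geq 2$ the principle $\textup{RT}^f_k$ trivially implies $\textup{RT}^f_2$, it follows that $\textup{RT}^f_k$ also fails in this model while $\textup{WKL}_0$ holds.

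There is essentially no obstacle to overcome: both directions reduce to citations once one verifies the trivial reduction $\textup{RT}^2_k \Rightarrow \textup{RT}^f_k$ and the trivial reduction $\textup{RT}^f_k \Rightarrow \textup{RT}^f_2$ for $k \geq 2$. The only point requiring a moment of care is confirming that the computable instance supplied by Theorem~\ref{main} is indeed present in the $\Delta^0_2$ $\omega$-model of $\textup{WKL}_0$, which is immediate because every computable set belongs to any $\omega$-model of $\textup{RCA}_0$.
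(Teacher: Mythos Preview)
Your proposal is correct and follows essentially the same approach as the paper: the paper's justification, given in the sentence immediately preceding the corollary, invokes exactly the two ingredients you use---that $\textup{RT}^2_k$ implies $\textup{RT}^f_k$ but not $\textup{WKL}_0$ (Liu), and that $\textup{WKL}_0$ has an $\omega$-model consisting entirely of $\Delta^0_2$ sets, which cannot contain a solution to the computable instance of $\textup{RT}^f_2$ from Theorem~\ref{main}. Your write-up simply makes explicit the trivial reductions $\textup{RT}^2_k \Rightarrow \textup{RT}^f_k$ and $\textup{RT}^f_k \Rightarrow \textup{RT}^f_2$ (for $k \geq 2$) that the paper leaves implicit.
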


In particular, both HT$^{=2}_k$ and RT$^{\subt}_k$ are incomparable with
$\textup{WKL}_0$ over $\textup{RCA}_0$.

Theorem~\ref{main} also has a reverse-mathematical version. For the
purposes of reverse mathematics, we should alter the definition of
addition-like function to remove the computability requirements. In
other words, $f : [\mathbb N]^2 \rightarrow \mathbb N$ is
addition-like in the sense of reverse mathematics if there is a
function $g$ such that if $y>g(x,n)$ then $f(x,y)>n$, and there is a
$b$ such that for all $x \neq y$, there are at most $b$ many $z$'s for
which $f(x,z)=f(x,y)$.

We also need to be careful in defining the reverse-mathematical analog
of the notion of being DNC over the jump, since the existence of the
jump cannot be proved in RCA$_0$. Given a set $X$, we can of course
approximate $X'$, so we can define $\Phi_e^{X'}(x)[s]$ as
usual. We adopt the convention that if $\Phi_e^{X'}(x)[s]\converges$
with use $u$ and $X'[s+1] \uhr u \neq X'[s] \uhr u$, then
$\Phi_e^{X'}(x)[s+1]\diverges$. We now define $\Phi_e^{X'}(x)=y$ to mean
that $\exists t\, \forall s \geq t\, [\Phi_e^{X'}(x)[s]=y]$. We write
$\Phi_e^{X'}(x) \neq y$ to mean that either $\Phi_e^{X'}(x)\diverges$
or $\Phi_e^{X'}(x)=z$ for $z \neq y$. We write $n \in W_e^{X'}$ to
mean that $\exists t\, \forall s \geq t \, [n \in W_e^{X'}[s]]$, where
$W_i^{X'}[s] = \{n<s : \Phi_e^{X'}(n)[s]\converges\}$.

Now \emph{2-DNC} is the statement that for every $X$, there is a
function $h$ such that $h(e) \neq \Phi_e^{X'}(e)$ for all $e$.

Inspecting the proofs of Theorem~\ref{RSthm} and Corollary~\ref{rscor}
in~\cite{RS}, we see that they can be carried out in RCA$_0$. Thus we
can obtain the following analog of Corollary~\ref{usecor}.

\begin{cor}
The following is provable in $\textup{RCA}_0$: For each $q \in (0,1)$
there is an $M$ such that the following holds. Let $F_0,F_1,\ldots$ be
a sequence of finite sets, each of size at least $M$. Suppose that for
each $m \geq M$ and $n$, there are at most $2^{qm}$ many $i$ such that
$|F_i|=m$ and $n \in F_i$, and that there is a function taking $m$ and
$n$ to the set of all such $i$. Then there is a $c : \mathbb N
\rightarrow 2$ such that for each $i$ the set $F_i$ is not homogeneous
for $c$.
\end{cor}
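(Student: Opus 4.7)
The plan is to replay, inside RCA$_0$, the classical derivation of Corollary \ref{usecor} from Corollary \ref{rscor}, and to treat the RCA$_0$-provability of the preceding corollary as a black box, as the authors have indicated by the sentence ``Inspecting the proofs of Theorem~\ref{RSthm} and Corollary~\ref{rscor} in~\cite{RS}, we see that they can be carried out in RCA$_0$.'' So the construction needed is really just the same two-copies-per-$F_j$ trick used to pass from Corollary \ref{rscor} to Corollary \ref{usecor} in the classical setting.

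Given $q \in (0,1)$, first fix a rational $q' \in (q,1)$. Let $M_0$ be the constant supplied by the RCA$_0$-version of Corollary \ref{rscor} for the parameter $q'$, and take $M = \max(M_0, \lceil 1/(q'-q) \rceil)$, chosen so that $2 \cdot 2^{qm} \leq 2^{q'm}$ for every $m \geq M$. These choices are primitive recursive in $q$ and unproblematic in RCA$_0$.

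Given the sequence $F_0, F_1, \ldots$ from the hypothesis, form the sequence $\sigma_0, \sigma_1, \ldots$ of finite partial functions $\mathbb N \to 2$ by setting $\dom(\sigma_{2j}) = \dom(\sigma_{2j+1}) = F_j$, with $\sigma_{2j} \equiv 0$ and $\sigma_{2j+1} \equiv 1$; the existence of this sequence as a second-order object is by $\Delta^0_1$-comprehension from the $F_i$'s. Each $\sigma_i$ has size at least $M$, and for each $m \geq M$ and $n$ the set $\{i : |\sigma_i| = m \text{ and } n \in \dom(\sigma_i)\}$ has cardinality at most $2 \cdot 2^{qm} \leq 2^{q'm}$ and is computably obtained from the corresponding set for the $F_i$'s via the function supplied by the hypothesis. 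The hypotheses of the RCA$_0$-version of Corollary \ref{rscor} with parameter $q'$ are therefore met, yielding $c : \mathbb N \to 2$ such that for every $i$ some $n \in \dom(\sigma_i)$ has $c(n) = \sigma_i(n)$. Applying this with $i = 2j$ gives an element of $F_j$ colored $0$ and with $i = 2j+1$ gives one colored $1$, so $F_j$ is not homogeneous for $c$, as required.

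The only genuine obstacle is the one already dispensed with by the preceding paragraph of the paper: checking that the construction of the assignment in \cite{RS} is carried out using only $\Sigma^0_1$-induction and $\Delta^0_1$-comprehension, so that Theorem \ref{RSthm} and Corollary \ref{rscor} are provable in RCA$_0$. Once this is granted, everything above is routine combinatorial bookkeeping that introduces no further foundational demands.
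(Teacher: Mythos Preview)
Your proposal is correct and follows essentially the same approach as the paper: take the RCA$_0$-provability of Corollary~\ref{rscor} as given (per the authors' remark about inspecting the proofs in~\cite{RS}), then replay the two-copies-per-$F_j$ reduction with a slightly larger parameter $q'>q$ exactly as in the passage from Corollary~\ref{rscor} to Corollary~\ref{usecor}. The only addition you make beyond the paper's one-line justification is to spell out the choice of $M$ and note that the bookkeeping uses only $\Delta^0_1$-comprehension, which is a reasonable elaboration rather than a departure.
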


The proof of Theorem~\ref{main}, relativized to a given oracle $X$,
can now be carried out in RCA$_0$, except for one issue: In the
absence of $\Sigma^0_2$-bounding, it is possible to have $b(M+i)$ many
$n$ such that $n \in W_i^{X'}$ without having a single $s$ such that
$|W_i^{X'}[s]| \geq b(M+i)$. In this case, we would have
$E_i[s]=\emptyset$ for all $s$.

To get around this issue, we do not attempt to establish effective
immunity relative to $X'$, but work instead with a modified
notion. Write $\|W_e^{X'}\| \geq m$ to mean that there are a
finite set $F$ with $|F| \geq m$ and a $t$ such that $n \in
W_e^{X'}[s]$ for all $n \in F$ and $s \geq t$. Now \emph{2-EI} is
the statement that for each $X$, there are an infinite set $A$ and a
function $f$ such that if $\|W_e^{X'}\| \geq f(e)$, then there is
an $n \in W_e^{X'}$ with $n \notin A$.

The proof of Theorem~\ref{main}, relativized to an arbitrary $X$,
shows that if $f$ is addition-like then $\textup{RT}^f_2$ implies
$2$-EI over RCA$_0$. The main point to notice in that proof is the
following: Suppose that $\|W_i^{X'}\| \geq b(M+i)$. By definition,
there are $F$ and $t$ such that $|F| \geq b(M+i)$ and $n \in
W_i^{X'}[s]$ for all $n \in F$ and $s \geq t$. By bounded
$\Pi^0_1$-comprehension, which holds in RCA$_0$, we can form the set
$\widehat{F}$ of all $n \leq \max F$ such that $n \in W_i^{X'}[s]$ for
all $s \geq t$, and then let $G$ be the set consisting of the $b(M+i)$
many least elements of $\widehat{F}$ in the $\prec$-ordering defined
at stage $t$. If $k \in W_i^{X'}[t] \setminus G$ then there is an $s_k
> t$ such that $k \notin W_i^{X'}[s_k]$. By $\Sigma^0_1$-bounding,
which holds in RCA$_0$, there is a $u$ such that we can take $s_k \leq
u$ for all such $k$. If $s \geq u$, then for any $k \in W_i^{X'}[s]
\setminus G$ and any $n \in G$, we have that $n \prec k$ for the
ordering $\prec$ defined at stage $s$. It follows that $E_i[s]=G$ for
$s \geq u$.

To obtain $2$-DNC, we use the following proposition, whose proof is
based on that of Theorem~\ref{Jthm} given in~\cite{Jfpf}. (We need
only one direction of the proposition, but the equivalence it
establishes is of independent interest.)

\begin{prop}
$2\textup{-EI}$ is equivalent to $2\textup{-DNC}$ over
$\textup{RCA}_0$.
\end{prop}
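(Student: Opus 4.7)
The plan is to prove both implications in $\textup{RCA}_0$ by mirroring Jockusch's proof of Theorem~\ref{Jthm} in~\cite{Jfpf}, adapted to the $\|\cdot\|$-version of $X'$-c.e.\ membership fixed above. Throughout, I will use the relativized $s$-$m$-$n$ and recursion theorems together with $\Sigma^0_1$-bounding, all available in $\textup{RCA}_0$.

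For the direction $2\textup{-EI} \Rightarrow 2\textup{-DNC}$, which is what is needed for the applications, fix $X$ and obtain $A$ and $f$ from $2\textup{-EI}$. For each $e$, the idea is to apply the relativized recursion theorem uniformly in $e$ to produce an index $j(e)$ whose associated set $W_{j(e)}^{X'}$ is inflated so that $\|W_{j(e)}^{X'}\| \geq f(j(e))$ (via permanent padding computations that use no $X'$-oracle queries), and is arranged so that the only element that can fail to lie in $A$ is the limit value $\Phi_e^{X'}(e)$ (when defined). Then the $2\textup{-EI}$ conclusion forces $\Phi_e^{X'}(e) \notin A$, and setting $h(e) := \min A$ yields $h(e) \in A$ yet $\Phi_e^{X'}(e) \neq h(e)$, as required.

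The main obstacle is that $A$ is not in general $X'$-c.e., so padding $W_{j(e)}^{X'}$ with elements of $A$ cannot be done literally inside an $X'$-oracle computation. I will handle this in the spirit of Jockusch's original argument, by relativizing the whole construction to $Y := X \oplus A$ and applying $2\textup{-EI}$ to $Y$ (rather than to $X$) to obtain auxiliary $A^*, f^*$ whose padding is now available in $Y$-computations; the construction above then produces a DNC function $h^*$ over $Y'$. Finally, transferring $h^*$ back to $X'$ is routine: by $s$-$m$-$n$, there is a computable $\sigma$ with $\Phi_{\sigma(e)}^{Y'}(\sigma(e)) = \Phi_e^{X'}(e)$, so $h(e) := h^*(\sigma(e))$ witnesses $2\textup{-DNC}$ for $X$. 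Extracting this reindexing is the routine bookkeeping step; the recursion-theoretic self-reference inside the construction of $j(e)$ is the conceptual heart.

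For the converse $2\textup{-DNC} \Rightarrow 2\textup{-EI}$, which the paper notes is of independent interest, given $h$ DNC over $X'$ I will take $A$ to be an infinite set extracted from the values of $h$ on a sparse computable sequence of indices (to guarantee both infinity and computability of $A$ from $h$), and define $f$ via $s$-$m$-$n$ bookkeeping. If some $W_e^{X'}$ had $\|W_e^{X'}\| \geq f(e)$ and were contained in $A$, the relativized recursion theorem would produce a fixed point $p$ such that $\Phi_p^{X'}(p)$ converges in the limit to a designated element of $W_e^{X'} \cap A$ that is, by the construction of $A$, equal to $h(p)$, violating the DNC hypothesis. The transition from ordinary $X'$-c.e.\ membership to the $\|\cdot\|$-notion is handled by $\Sigma^0_1$-bounding in the same way as in the displayed argument preceding the proposition.
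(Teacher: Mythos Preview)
Your plan for $2\textup{-EI}\Rightarrow 2\textup{-DNC}$ cannot work as stated. You aim to show that whenever $\Phi_e^{X'}(e)$ converges it lies outside $A$, and then take $h(e):=\min A$; but this $h$ is a \emph{constant} function, and no constant function is DNC relative to any oracle (for any $c$ there is an $e$ with $\Phi_e^{X'}(e)=c$). Tracing such an $e$ through your construction shows where it breaks: if the padding alone already gives $\|W_{j(e)}^{X'}\|\geq f(j(e))$, then for any $e$ with $\Phi_e^{X'}(e)\diverges$ every permanent element of $W_{j(e)}^{X'}$ lies in $A$, contradicting $2$-EI outright; if the padding contributes only $f(j(e))-1$ elements, then for $e$ with $\Phi_e^{X'}(e)=\min A\in A$ the whole of $W_{j(e)}^{X'}$ still lies in $A$ while $\|W_{j(e)}^{X'}\|\geq f(j(e))$, again contradicting $2$-EI. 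The underlying reason no such $j$ can exist is an oracle mismatch in your recursion-theorem step: the transformation $(e,j)\mapsto\Psi(e,j)$ must evaluate $f(j)$ and list elements of $A$, so $\Psi$ is only $(A\oplus f)$-computable, and the recursion theorem then yields a fixed point for the $(A\oplus f)$-indexing, which tells you nothing about $W_{j(e)}^{X'}$. Your detour through $Y=X\oplus A$ does not repair this, since $2$-EI applied to $Y$ produces new $A^*,f^*$ with no reason to be $Y$-computable, so the same mismatch recurs.

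The paper avoids self-reference entirely by taking the fixed-point-free (Arslanov-style) route: define $g$ so that $W_{g(e)}^{X'}$ hardcodes the first $f(e)$---not $f(g(e))$---elements of $A$; then $W_{g(e)}^{X'}\not\approx W_e^{X'}$ by $2$-EI, and composing $g$ with a computable $p$ satisfying $W_{p(e)}^{X'}\approx W_{\Phi_e^{X'}(e)}^{X'}$ gives $h=g\circ p$, which is DNC over $X'$ and genuinely non-constant. Your sketch for the converse direction is also too thin to be a proof: taking $A$ to be ``values of $h$ on a sparse sequence'' need not be infinite (DNC functions can have range $\{0,1\}$), and the claimed fixed point $p$ with $\Phi_p^{X'}(p)=h(p)$ faces the same oracle-mismatch obstruction as above. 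The paper instead first amplifies $h$ to a $g$ with $g(e)\neq\Phi_i^{X'}(i)$ for all $i\leq e$, and then builds $A=\{a_0<a_1<\cdots\}$ with $f(e)=e+1$ by a direct recursion, choosing each $a_e$ via $g$ so as to avoid both all smaller numbers and the limit value of a designated ``least new element of $W_e^{X'}$'' functional.
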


\begin{proof}
We argue in RCA$_0$. First suppose that $2$-EI holds. Given $X$, let
$A$ and $f$ be as in the statement of $2$-EI. Write $W_e^{X'} \approx
W_i^{X'}$ if $W_e^{X'}[s]=W_i^{X'}[s]$ for all sufficiently large
$s$. Notice that in this case, for each $n$ we have $n \in W_e^{X'}$
if{}f $n \in W_i^{X'}$, and $\|W_e^{X'}\| \geq m$ if{}f $\|W_i^{X'}\|
\geq m$.

Let $n_0<n_1<\cdots$ be the elements of $A$ in order. There is a
function $g$ such that $W_{g(e)}^{X'}[s]=\{n_i<s : i<f(e)\}$ for all
$s$. Then $W_{g(e)}^{X'} \not\approx W_e^{X'}$ for all $e$, as otherwise
we would have $\|W_e^{X'}\| \geq f(e)$ but $n \in A$ for all $n \in
W_e^{X'}$.

There is a function $p$ such that $W^{X'}_{p(e)}[s]=W^{X'}_{y}[s]$ if
$\Phi_e^{X'}(e)[s]=y$, and $W^{X'}_{p(e)}[s]=\emptyset$ if
$\Phi_e^{X'}(e)[s]\diverges$. Let $h = g \circ p$. If
$\Phi^{X'}_e(e)=y$ then $W^{X'}_{p(e)} \approx W^{X'}_y$. But
$W_{h(e)}^{X'} \not\approx W_{p(e)}^{X'}$, since $h(e)=g(p(e))$, so
$W^{X'}_{h(e)} \not\approx W^{X'}_y$, and hence $h(e) \neq y$. Thus
$h$ is as in the definition of $2$-DNC.

Now suppose that $2$-DNC holds. Given $X$, let $h$ be as in the
statement of $2$-DNC. We first define a function $g$ such that for
each $e$, we have $g(e) \neq \Phi_i^{X'}(i)$ for all $i \leq e$. Let
$\tau_0,\tau_1,\ldots$ list the elements of $\omega^{<\omega}$. Let
$(\tau)_i$ be the $i$th element of $\tau$ if $|\tau|>i$, and let
$(\tau)_i=0$ otherwise. There is a function $r$ such that
$\Phi^{X'}_{r(i)}(r(i))=(\tau_k)_i$ if $\Phi^{X'}_i(i)=k$ and
$\Phi^{X'}_{r(i)}(r(i))\diverges$ if $\Phi^{X'}_i(i)\diverges$. Let
$g(e)$ be such that $|\tau_{g(e)}|=e+1$ and
$(\tau_{g(e)})_i=h(r(i))$ for all $i \leq e$. If $i \leq e$ and
$\Phi_i^{X'}(i)=k$ then $(\tau_{g(e)})_i \neq (\tau_k)_i$, so $g(e)
\neq k$.

Let $D_0,D_1,\ldots$ list the finite sets. Order the elements of
$W^{X'}_e[s]$ as in the proof of Theorem~\ref{main}. That is, for $x
\in W^{X'}_e[s]$, let $t_x$ be the least $t$ such that $x \in
W^{X'}_e[u]$ for all $u \in [t,s]$, then let $x \prec y$ if either
$t_x<t_y$ or both $t_x=t_y$ and $x<y$. There is a function $q$ such
that if $W^{X'}_e[s] \nsubseteq D_i$, then
$\Phi^{X'}_{q(e,i)}(q(e,i))[s]=n$ for the $\prec$-least $n \in
W^{X'}_e[s] \setminus D_i$.

We now define sequences $a_0<a_1<\cdots$ and $k_0,k_1,\ldots$ as
follows. Suppose that we have defined $a_j$ and $k_j$ for all
$j<e$. Let $i_e$ be such that $D_{i_e}=\{a_0,\ldots,a_{e-1}\}$. Let
$k_e=q(e,i_e)$. For $j \leq a_{e-1}$, let $m_j$ be such that
$\Phi^{X'}_{m_j}(m_j)=j$. Let
$m=\max\{k_0,\ldots,k_e,m_0,\ldots,m_{a_{e-1}}\}$ and let
$a_e=g(m)$. Notice that $a_e>a_{e-1}$.

Let $A=\{a_0,a_1,\ldots\}$. This set exists because the $a_e$ are
defined in order. Now suppose that $\|W_e^{X'}\| \geq e+1$. Then there
are a finite set $F$ with $|F|=e+1$ and a $t$ such that for all $s
\geq t$, every element of $F$ is in $W_e^{X'}[s]$. Let $$S=\{n\leq\max
F : \exists s \geq t\, [n \notin W_e^{X'}[s]]\}.$$ By
$\Sigma^0_1$-bounding, there is a $u \geq t$ such that $\exists s \in
[t,u]\, [n \notin W_e^{X'}[s]]$ for all $n \in S$. Let $G$ consist of
the $e+1$ least elements of $W_e^{X'}[u]$ under the
$\prec$-ordering. If $s \geq u$ then the elements of $G$ are also the
least $e+1$ many elements of $W^{X'}_e[s]$ under the
$\prec$-ordering. Since $|D_{i_e}|=e$, there is an $n \in G$ such that
$\Phi^{X'}_{q(e,i_e)}(q(e,i_e))[s]=n$ for all $s \geq u$, and hence
$\Phi^{X'}_{q(e,i_e)}(q(e,i_e))=n$. By the definition of $q$, we have
that $n \neq a_j$ for $j<e$, and by construction, $n \neq a_j$ for $j
\geq e$. Thus $n \in W^{X'}_e$ but $n \notin A$. So $A$ and the
function $e \mapsto e+1$ are as required by $2$-EI.
\end{proof}

We thus have the following result.

\begin{thm}
$\textup{RCA}_0$ proves that if $f$ is addition-like then
$\textup{RT}^f_2$ implies $2\textup{-DNC}$.
\end{thm}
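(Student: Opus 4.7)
The plan is to chain together the two ingredients that have been developed in this section: the $\textup{RCA}_0$ version of the effective Lov\'asz Local Lemma and the preceding proposition showing $2$-EI is equivalent to $2$-DNC. Given $X$, I would relativize the proof of Theorem~\ref{main} to $X$ and transcribe it inside $\textup{RCA}_0$. The construction produces the same $X$-computable sequence of finite sets $F_0,F_1,\ldots$, with $E_i[s]$ defined just as before but using the approximation to $W_i^{X'}$ in place of $W_i^{\emptyset'}$. The $\textup{RCA}_0$ version of Corollary~\ref{usecor}, stated just above, then yields a function $c:\mathbb N\to 2$ such that no $F_j$ is homogeneous for $c$. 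This $c$ is the instance of $\textup{RT}^f_2$ to which I would apply the hypothesis.

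The main obstacle is that without $\Sigma^0_2$-bounding one cannot, from the mere fact that $W_i^{X'}$ has at least $b(M+i)$ elements, produce a single stage $s$ at which $|W_i^{X'}[s]| \geq b(M+i)$. This is precisely why the modified notion $\|W_e^{X'}\| \geq m$ and the principle $2$-EI were introduced. I would handle this step exactly as sketched in the paragraph preceding the proposition: from $\|W_i^{X'}\| \geq b(M+i)$, use bounded $\Pi^0_1$-comprehension to form the set $\widehat F$ of numbers below $\max F$ that have permanently entered $W_i^{X'}$, and use $\Sigma^0_1$-bounding to produce a stage $u$ after which $E_i[s]$ stabilizes to the $\prec$-least $b(M+i)$ elements of $\widehat F$. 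From this stabilization point on, $f(E_i[s],s)$ is enumerated into the sequence $F_0,F_1,\ldots$ for all sufficiently large $s$, so any solution $H$ to $c$ must fail to contain some element of $W_i^{X'}$ whenever $\|W_i^{X'}\| \geq b(M+i)$.

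Combining these observations, I would let $A=H$ be any solution to $c$ and take the witnessing function for $2$-EI to be $e\mapsto b(M+e)$; applying this to every $X$ gives $\textup{RT}^f_2\to 2\textup{-EI}$ over $\textup{RCA}_0$. A final appeal to the preceding proposition, which establishes the equivalence of $2$-EI and $2$-DNC over $\textup{RCA}_0$, yields the desired implication $\textup{RT}^f_2\to 2\textup{-DNC}$.
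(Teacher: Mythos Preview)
Your proposal is correct and follows essentially the same approach as the paper: relativize the construction of Theorem~\ref{main} to an arbitrary $X$, use the $\textup{RCA}_0$ version of Corollary~\ref{usecor} to obtain the coloring $c$, handle the failure of $\Sigma^0_2$-bounding via the stabilization argument with bounded $\Pi^0_1$-comprehension and $\Sigma^0_1$-bounding to get $2$-EI with witness $e\mapsto b(M+e)$, and then invoke the proposition $2\textup{-EI}\leftrightarrow 2\textup{-DNC}$. The paper presents the theorem as an immediate consequence of the preceding discussion, and you have accurately reconstructed that discussion.
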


This theorem can be understood as an implication between
Ramsey-theoretic principles, because Miller [unpublished] has shown
that $2$-DNC is equivalent over RCA$_0$ to RRT$^2_2$, a version of the
Rainbow Ramsey Theorem that states that if $c : [\mathbb N]^2
\rightarrow \mathbb N$ is such that $|c^{-1}(i)| \leq 2$ for all $i$,
then there is an infinite set $R$ such that $c$ is injective on
$[R]^2$.

\begin{cor}
$\textup{RCA}_0$ proves that if $f$ is addition-like then
$\textup{RT}^f_2$ implies $\textup{RRT}^2_2$.
\end{cor}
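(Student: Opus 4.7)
The plan is a one-line composition of the two preceding results. The theorem immediately above the corollary gives that $\textup{RCA}_0 \vdash \textup{RT}^f_2 \to 2\textup{-DNC}$ for every addition-like $f$, and the paragraph between that theorem and the corollary records Miller's unpublished equivalence $\textup{RCA}_0 \vdash 2\textup{-DNC} \leftrightarrow \textup{RRT}^2_2$. Composing the preceding theorem with the $2\textup{-DNC} \to \textup{RRT}^2_2$ direction of Miller's equivalence yields $\textup{RCA}_0 \vdash \textup{RT}^f_2 \to \textup{RRT}^2_2$, which is exactly the statement of the corollary.

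Only the $2\textup{-DNC} \to \textup{RRT}^2_2$ half of Miller's equivalence is actually invoked; the reverse direction plays no role here and does not need to be verified for the purposes of this corollary. There is no real obstacle: the substantive content of the argument was carried out in the theorem above (and ultimately in the proof of Theorem~\ref{main} together with the effective Lov\'asz Local Lemma), and the present corollary simply repackages $2\textup{-DNC}$ in the combinatorial language of the Rainbow Ramsey Theorem. If a fully self-contained treatment were desired, the only additional work would be to reproduce Miller's argument in $\textup{RCA}_0$ that a function DNC relative to the jump of a coloring produces an infinite rainbow set for that coloring; but in this paper that fact is explicitly cited as an external (unpublished) result of Miller, so no such reproduction is required.
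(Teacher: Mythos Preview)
Your proposal is correct and matches the paper's own treatment exactly: the corollary is presented without a separate proof, as an immediate consequence of the preceding theorem ($\textup{RT}^f_2 \to 2\textup{-DNC}$ over $\textup{RCA}_0$) together with Miller's cited equivalence $2\textup{-DNC} \leftrightarrow \textup{RRT}^2_2$ over $\textup{RCA}_0$. Your remark that only the $2\textup{-DNC} \to \textup{RRT}^2_2$ direction is needed is also accurate.
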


For those familiar with Weihrauch reducibility, we will also say that
the proofs in~\cite{RS} are uniform, so the $c$ in
Corollary~\ref{usecor} can be obtained uniformly from the sequence
$F_0,F_1,\ldots$ (for a fixed $q$). The proof of Theorem~\ref{main} is
also uniform, as is the proof that computing an effectively immune set
implies computing a DNC function. Thus if $f$ is addition-like (in the
original sense of Definition~\ref{aldef}), then $2\textrm{-DNC}
\leq\sub{W} \textup{RT}^f_2$. Miller's aforementioned argument shows
that $\textrm{RRT}^2_2 \leq\sub{W} 2\textrm{-DNC}$, so we also have
that $\textrm{RRT}^2_2 \leq\sub{W} \textup{RT}^f_2$.

\section{Open Questions}

We finish with some open questions. Implications here could be over
RCA$_0$ or in the sense of notions of computability-theoretic
reduction such as Weihrauch reducibility.

\begin{q}
Does HT$^{=2}_2$ imply RT$^2_2$?
\end{q}

\begin{q}
Does RRT$^2_2$ imply HT$^{=2}_2$? 
\end{q}

\begin{q}
What is the exact relationship between HT$^{=2}_j$ and HT$^{=2}_k$ for
$j \neq k$?
\end{q}

\begin{q}
Does either of HT$^{=2}_2$ and RT$^{\subt}_2$ imply the other?
\end{q}

Jockusch~\cite{J} showed that for each $n \geq 2$, there is a
computable instance of RT$^n_2$ with no $\Sigma^0_n$ solution.

\begin{q}
For $n \geq 3$, is there a computable instance of HT$^{=n}_2$ with no
$\Sigma^0_n$ solution?
\end{q}

A positive answer to this question would imply that HT is not provable
in ACA$_0$, for the same reason that Jockusch's aforementioned result
implies that $\textrm{RT}$ (the principle $\forall n\, \forall k\,
\textrm{RT}^n_k$) is not provable in ACA$_0$ (see e.g.\ Section 6.3 of
\cite{Hbook}). The question is also open for HT$^{\leq n}_2$, and even
for full HT.

\begin{q}
Is it true that for every degree $\mathbf a$ that is DNC relative to
$\emptyset'$ and every computable instance $c$ of HT$^{=2}_2$, there
is an $\mathbf a$-computable solution to $c$?
\end{q}

A positive answer to the above question would show that Theorem
\ref{main} is best possible in a strong sense.

\begin{q}
What is the first-order strength of HT$^{=2}_2$? What about HT$^{=2}$?
\end{q}

Of course, analogs of the above questions can also be asked for
RT$^{\subt}_2$ or RT$^f_2$ for other addition-like (but not bijective)
functions $f$.

\end{document}